\DeclareMathOperator{\glt}{GLT}
\DeclareMathOperator{\glteq}{\sim_{\glt}}
\DeclareMathOperator{\SL}{SL}
\DeclareSymbolFont{eulargesymbols}{U}{zeuex}{m}{n}
\DeclareMathSymbol{\intop}{\mathop}{eulargesymbols}{"52}
\DeclareMathSymbol{\ointop}{\mathop}{eulargesymbols}{"49}
\newcommand{\al}{\alpha}
\newcommand{\bC}{\mathbb C}
\newcommand{\be}{\beta}
\newcommand{\bN}{\mathbb N}
\newcommand{\bR}{\mathbb R}
\newcommand{\bT}{\mathbb T}
\newcommand{\bZ}{\mathbb Z}
\newcommand{\cI}{\mathcal I}
\newcommand{\de}{\delta}
\newcommand{\De}{\Delta}
\newcommand{\e}{\mathrm{e}}
\newcommand{\eps}{\varepsilon}
\newcommand{\ga}{\gamma}
\newcommand{\Ga}{\Gamma}
\newcommand{\ka}{\kappa}
\newcommand{\la}{\lambda}
\newcommand{\nf}{\infty}
\newcommand{\ph}{\varphi}
\newcommand{\Ph}{\Phi}
\newcommand{\si}{\sigma}
\newcommand{\x}{\raisebox{0.2mm}{\mbox{\tiny $\times$}\hspace{-0.3mm}}}
\newcommand{\tht}{\theta}
\newcommand{\Tht}{\Theta}
\renewcommand{\ge}{\geqslant}
\renewcommand{\d}{\dif}
\renewcommand{\i}{\mathrm{i}}
\renewcommand{\le}{\leqslant}
\def\Yint#1{\mathchoice
{\YYint\displaystyle\textstyle{#1}}
{\YYint\textstyle\scriptstyle{#1}}
{\YYint\scriptstyle\scriptscriptstyle{#1}}
{\YYint\scriptscriptstyle\scriptscriptstyle{#1}}\!\int}
\def\YYint#1#2#3{{\setbox0=\hbox{$#1{#2#3}{\int}$}
\vcenter{\hbox{$#2#3$}}\kern-.52\wd0}}
\def\dint{\,\Yint\nmid}
\tikzstyle arrowstyle=[scale=1]
\tikzstyle directed=[postaction={decorate,decoration={markings,
    mark=at position .65 with {\arrow[arrowstyle]{stealth}}}}]
\tikzstyle reverse directed=[postaction={decorate,decoration={markings,
    mark=at position .65 with {\arrowreversed[arrowstyle]{stealth};}}}]
\numberwithin{equation}{section}
\newtheorem{lemma}{Lemma}[section]
\newtheorem{theorem}[lemma]{Theorem}
\newtheorem{definition}{Definition}[section]
\title{\textsc{Eigenvalue superposition expansion for Toeplitz matrix-sequences, generated by linear combinations of matrix-order dependent symbols, and applications to fast eigenvalue computations}}
\author[1]{M. Bogoya\thanks{johanmanuel.bogoya@uninsubria.it}}
\author[1]{S. Serra--Capizzano\thanks{s.serracapizzano@uninsubria.it}}
\affil[1]{\footnotesize University of Insubria, Como, Italy.\\
Dipartimento di Scienza e Alta Tecnologia.}
\date{\today}
\begin{document}

\maketitle

\begin{abstract}
The eigenvalues of Toeplitz matrices $T_{n}(f)$ with a real-valued symbol $f$, satisfying some conditions and tracing out a simple loop over the interval $[-\pi,\pi]$, are known to admit an asymptotic expansion with the form
\[
\la_{j}(T_{n}(f))=f(d_{j,n})+c_{1}(d_{j,n})h+c_{2}(d_{j,n})h^{2}+O(h^{3}),
\]
where $h=\frac{1}{n+1}$, $d_{j,n}=\pi j h$, and $c_k$ are some bounded coefficients depending only on $f$. The numerical results presented in the literature suggests that the effective conditions for the expansion to hold are weaker and reduce to an even character of $f$, to a fixed smoothness, and to its monotonicity over $[0,\pi]$. \\
In this note we investigate the superposition caused over this expansion, when considering a linear combination of symbols that is
\[
\la_{j}\big(T_{n}(f_0)+\be_{n}^{(1)} T_{n}(f_{1}) + \be_{n}^{(2)} T_{n}(f_{2}) +\cdots\big),
\]
where $ \be_{n}^{(t)}=o\big(\be_{n}^{(s)}\big)$ if $t>s$ and the symbols $f_{j}$ are either simple loop or satisfy the weaker conditions mentioned before.
We prove that the asymptotic expansion holds also in this setting under mild assumptions and we show numerically that there is much more to investigate, opening the door to linear in time algorithms for the computation of eigenvalues of large matrices of this type.

The problem is of concrete interest in particular in the case where the coefficients of the linear combination are functions of $h$, considering spectral features of matrices stemming from the numerical approximation of standard differential operators and distributed order fractional differential equations, via local methods such as Finite Differences, Finite Elements, Isogeometric Analysis etc.
\medskip\\
\noindent\textbf{MSC Classes}: Primary 15B05, 65F15, 47B35. Secondary 15A18, 47A38.
\end{abstract}

\section{Introduction and Preliminaries}

The current section is divided into two parts, both of introductory type.

In the first (Subsection \ref{ssec:prelim}), we introduce notations, definitions, and preliminary results concerning Toeplitz structures, which are essential for the mathematical formulation of the problem and its technical solution.

In the second (Subsection \ref{ssec:intro}), we present a standard introduction to the considered problem and of the previous literature. Beside the technical results, which are mathematically non trivial, the new findings have application to the design of fast eigensolvers (of matrix-less type; see \cite{EkGa19,EkGa18} and references therein) for the computation in linear time of the eigenvalues of large matrices, stemming e.g. from the numerical approximation via local methods, like Finite Differences, Finite Elements, Isogeometric Analysis etc (see \cite{Ci02,CoHu09,St89} and references there reported), of coercive differential equations like diffusion-advection, or from distributed order fractional differential equations again approximated by using local methods (see \cite{Ab19,BoGr21,MaSe21} and references therein).
In Subsection \ref{ssec:intro} we also present a brief account on the theory of Generalized Locally Toeplitz ($\glt$) matrix-sequences \cite{BaGa20b,BaGa20a,GaSe17,GaSe18}, which is of support in interpreting our findings from a different perspective.

The rest of the paper is organized as follows. In Section \ref{sec:Main} we present the problem and the results and in Section \ref{sec:Main-Proof} the related proofs are given. Section \ref{sec:Num} is devoted to numerical experiments, showing the potential of matrix-less eigensolvers. Finally,
in Section \ref{sec:Final}, conclusions, open problems, and future promising developments are illustrated, including the challenging connections with the notion of $\glt$ momentary symbols (see \cite{BoEk21-0,BoEk21a}).

\subsection{Preliminaries}\label{ssec:prelim}
We consider a Toeplitz matrix
\begin{equation*}
\left[a_{i-j}\right]_{i,j=1}^{n}=\begin{bmatrix}
a_0 & a_{-1} & a_{-2} & \cdots & \cdots & a_{-(n-1)} \\
a_{1} & \ddots & \ddots & \ddots & & \vdots\\
a_{2} & \ddots & \ddots & \ddots & \ddots & \vdots\\
\vdots & \ddots & \ddots & \ddots & \ddots & a_{-2}\\
\vdots & & \ddots & \ddots & \ddots & a_{-1}\\
a_{n-1} & \cdots & \cdots & a_{2} & a_{1} & a_0
\end{bmatrix},
\end{equation*}
which is characterized from the fact to show constant entries along each diagonal. When taking a function $f\colon[-\pi,\pi]\to\bC$ belonging to $L^{1}([-\pi,\pi])$, the $n$th Toeplitz matrix generated by $f$ is formally expressed as
\begin{equation*}
T_{n}(f)=\big[\hat f_{i-j}\big]_{i,j=1}^{n},
\end{equation*}
where the quantities $\hat f_k$ are the Fourier coefficients of $f$,
\begin{equation*}
\hat f_{k}=\frac1{2\pi}\int_{-\pi}^{\pi}f(\tht)\,\e^{-\i k\tht}\d \tht,\qquad k\in\bZ.
\end{equation*}
In order to fix the terminology, we refer to $\{T_{n}(f)\}_{n}$ as the Toeplitz sequence generated by $f$, which in turn is called the generating function or the symbol of $\{T_{n}(f)\}_{n}$: we remind that the notion of symbol is a different notion recalled in Definition \ref{def-distribution} and the related theorem for Toeplitz matrix-sequences is reported in Theorem \ref{toeplitz distribution}. If the generating function $f$ is real-valued, then all the matrices $T_{n}(f)$ are Hermitian and their spectral properties are known in detail, from the localization of the eigenvalues to the asymptotic spectral distribution in the Weyl sense; see Definition \ref{def-distribution},
Theorem \ref{toeplitz distribution}, \cite{BoSi99,GaSe17} and the references therein.

In the current work we focus on the case where $f$ is real-valued and shows an infinite cosine expansion, that is, a function of the form
\[
f(\tht)=\hat f_0+2\sum_{k=1}^m\hat f_k\cos(k\tht),\qquad \hat f_0,\hat f_{1},\ldots,\hat f_m\in\bR,\qquad m\in\bN\cup \{\nf\},
\]
so that $f(2\pi-s)=f(s)$. We say that a cosine expansion function $f$ is monotone if it is either increasing or decreasing over the interval $[0,\pi]$.
The $n$th Toeplitz matrix generated by $f$ is the real symmetric matrix given by
\[ T_{n}(f) = \left[\begin{array}{cc|ccccccc|cc}
\hat f_0 & \multicolumn{1}{c}{\hat f_{1}} & \cdots & \hat f_m & & & & & \multicolumn{1}{c}{} & & \\
\hat f_{1} & \multicolumn{1}{c}{\ddots} & \ddots & & \ddots & & & & \multicolumn{1}{c}{} & & \\
\vdots & \multicolumn{1}{c}{\ddots} & \ddots & \ddots & & \ddots & & & \multicolumn{1}{c}{} & & \\
\hat f_m & \multicolumn{1}{c}{} & \ddots & \ddots & \ddots & & \ddots & & \multicolumn{1}{c}{} & & \\
& \multicolumn{1}{c}{\ddots} & & \ddots & \ddots & \ddots & & \ddots & \multicolumn{1}{c}{} & & \\
\cline{3-9}
& & {\hat f_m}^{\vphantom{\int}} & \cdots & \hat f_{1} & \hat f_0 & \hat f_{1} & \cdots & \hat f_m & & \\
\cline{3-9}
& \multicolumn{1}{c}{} & & \ddots & & \ddots & \ddots & \ddots & \multicolumn{1}{c}{} & \ddots & \\
& \multicolumn{1}{c}{} & & & \ddots & & \ddots & \ddots & \multicolumn{1}{c}{\ddots} & & \hat f_m\\
& \multicolumn{1}{c}{} & & & & \ddots & & \ddots & \multicolumn{1}{c}{\ddots} & \ddots & \vdots\\
& \multicolumn{1}{c}{} & & & & & \ddots & & \multicolumn{1}{c}{\ddots} & \ddots & \hat f_{1}\\
& \multicolumn{1}{c}{} & & & & & & \hat f_m & \multicolumn{1}{c}{\cdots} & \hat f_{1} & \hat f_0
\end{array}\right],\]
with $m\le n-1$, and we emphasize that our analysis will be not restricted to banded matrices, as done mainly in some of the previous works (see for instance \cite{EkFu18b,EkGa19,EkGa18}).
In \cite{BoBo15a,BoGr17,BoGr10} the subsequent results were proven: if $f$ is monotone in the sense specified before, smooth, and satisfies certain additional assumptions, which include the requirements that $f'(\tht)\ne0$ for $\tht\in(0,\pi)$ and $f''(\tht)\ne0$ for $\tht\in\{0,\pi\}$, then, for every integer $\al\ge0$, every $n$, and every $j=1,\ldots,n$, the representation below
\begin{equation}\label{hoapp}
\la_{j}(T_{n}(f))=f(\tht_{j,n})+\sum_{k=1}^{\al}c_k(\tht_{j,n})h^k+E_{j,n,\al},
\end{equation}
is true, where the related asymptotic expansion has the following features:
\begin{itemize}
\item the eigenvalues of $T_{n}(f)$ are arranged in nondecreasing or nonincreasing order, depending on whether $f$ is increasing or decreasing;
\item $\{c_k\}_{k=1,2,\ldots}$ is a sequence of functions from $[0,\pi]$ to $\bR$ which depends only on $f$;
\item $h=\frac{1}{n+1}$ and $\tht_{j,n}=\frac{j\pi}{n+1}=j\pi h$;
\item $E_{j,n,\al}=O(h^{\al+1})$ is the remainder (the error), satisfying the inequality $|E_{j,n,\al}|\le C_\al h^{\al+1}$ for some constant $C_\al$ depending only on $\al$ and $f$.
\end{itemize}

When approximating the operator $(-1)^q\frac{\d^{\,2q}}{\d x^{2q}}$, $q=0,1,2,\ldots$, on a given interval with proper boundary conditions, we end up with structures either as $T_{n}(f_q)$ with $f_q$ being a monotone, real-valued cosine polynomial of the form
\begin{equation}\label{f_q}
f_q(\tht)=(2-2\cos\tht)^q,\qquad q=0,1,2,\ldots,
\end{equation}
if one uses centered Finite Differences of precision order $2$ \cite{St89} or as $T_{n}(g_q)$ with $g_q$ being a real-valued cosine polynomial of the form
\begin{equation}\label{g_q}
g_q(\tht)=(2-2\cos\tht)^q p_q(\tht),\qquad q=0,1,2,\ldots,
\end{equation}
where $p_q$ is a strictly positive cosine polynomial, when using the Isogeometric Analysis with maximal regularity \cite{CoHu09,GaMa14}.
We recall that the considered Finite Differences are characterized by $O(h^{2})$ precision and minimal bandwidth, while in the case of (\ref{g_q}) the bandwidth is larger, but a much higher precision is described in \cite{CoHu09}.

Unfortunately, for these generating functions the requirement that $f''(0)\ne0$ is not satisfied if $q\ne2$ and in \cite{EkGa18} numerical evidences, that the higher order approximation~\eqref{hoapp} holds even in this ``degenerate case'', are reported and discussed. Actually, based on numerical experiments, it was conjectured that \eqref{hoapp} holds at least for all monotone cosine trigonometric polynomials $f$.

In \cite{BoBo15a}, the authors also briefly mentioned that the asymptotic expansion~\eqref{hoapp} can be used to compute an accurate approximation of $\la_{j}(T_{n}(f))$ for very large $n$, provided the values $\la_{j_{1}}(T_{n_{1}}(f))$, $\la_{j_{2}}(T_{n_{2}}(f))$, $\la_{j_{3}}(T_{n_{3}}(f))$ are available for moderately sized $n_{1},n_{2},n_{3}$ with $\tht_{j_{1},n_{1}}=\tht_{j_{2},n_{2}}=\tht_{j_{3},n_{3}}=\tht_{j,n}$. The idea has evolved in \cite{EkFu18b,EkGa19,EkGa18} and highly accurate matrix-less methods of optimal linear cost have been developed: a wide set of numerical experiments has been reported, accompanied by an appropriate error analysis.
It should be stressed that in essence the matrix-less algorithms are completely analogous to the extrapolation procedure, which is employed in the context of Romberg integration for obtaining high precision approximations of an integral from a few coarse trapezoidal approximations \cite[\S3.4]{StBu02}. In this regard, the asymptotic expansion~\eqref{hoapp} plays here the same role as the Euler--Maclaurin summation formula \cite[\S3.3]{StBu02}.

Here we are interested to extend the machinery both theoretically and computationally to the more involved case of linear combinations of matrix-order depending symbols (see (\ref{eigs of superposition}) in Subsection \ref{ssec:intro}), with the following two targets:
\begin{itemize}
\item proving formally the asymptotic expansions;
\item giving related matrix-less procedures.
\end{itemize}

\subsection{The Problem and the Literature}\label{ssec:intro}

The eigenvalues of Toeplitz matrices $T_{n}(f)$ with a real-valued symbol $f$, satisfying some conditions and tracing out a simple loop over the interval $[-\pi,\pi]$, are known to admit an asymptotic expansion with the form
\[
\la_{j}(T_{n}(f))=f(d_{j,n})+c_{1}(d_{j,n})h+c_{2}(d_{j,n})h^{2}+O(h^{3}),
\]
where $h=\frac{1}{n+1}$, $d_{j,n}=\pi j h$, and $c_k$ are some bounded coefficients depending only on $f$. The numerical results presented in the literature suggests that the effective conditions for the expansion to hold are weaker and reduce to an even character of $f$ and monotonicity over $[0,\pi]$. \\
As already mentioned in Subsection \ref{ssec:prelim}, we investigate the superposition caused over this expansion, when considering a linear combination of symbols, that is
\begin{equation}\label{eigs of superposition}
\la_{j}\big(T_{n}(f_0+\be_{n}^{(1)} f_{1} + \be_{n}^{(2)} f_{2} +\cdots)\big),
\end{equation}
where $ \be_{n}^{(t)}=o(\be_{n}^{(s)})$ if $t>s$ and the symbols $f_{j}$ are either simple loop or satisfy the weaker conditions mentioned before.
We prove that the asymptotic expansion holds also in this setting under mild assumptions and we show numerically that there is much more to investigate, opening the door to linear in time algorithms for the computation of eigenvalues of large matrices of this type.

The problem is of application interest in particular in the case where the coefficients of the linear combination are given functions of $h$, considering spectral features of matrices stemming from the numerical approximation of standard differential operators and distributed order fractional differential equations \cite{BoGr21,DoMa16,MaSe21}. In particular, for standard differential operators, our approach could be very promising for any local approximation technique of integro-differential operators, giving rise to $\glt$ matrix-sequences (see \cite{BaGa20b,BaGa20a,GaSe17,GaSe18} and references therein), with special attention to the case of Finite Elements \cite{Ci02} and Isogeometric analysis both with maximal regularity and intermediate regularity \cite{CoHu09}.

Furthermore, from a theoretical viewpoint, it is worth stressing that in the current work it is the first time that an eigenvalue expansion is theoretically obtained for a Toeplitz matrix-sequence with a symbol depending on $n$.

In the next steps we introduce the essential of the $\glt$ theory (see \cite{BaGa20b,BaGa20a,GaSe17,GaSe18} and references therein) and of the new concept of $\glt$ momentary symbols \cite{BoEk21a}, related to matrix structures as those appearing in (\ref{eigs of superposition}).

\subsubsection{$\glt$ theory and $\glt$ momentary symbols}

In this technical part we give the essentials of the $\glt$ theory. We start with the definition of spectral symbol and of symbol (in the singular value sense). Then we give the axioms that characterize the $\glt$ matrix-sequences and we spend few words on the new concept of $\glt$ momentary symbols.

\begin{definition}\label{def-distribution}
Let $f\colon D\to\bC$ be a measurable function defined on the Lebesgue measurable set $D$ of positive and finite measure. Assume that $\{A_{n}\}_{n}$ is a sequence of matrices such that $\dim(A_{n})=d_{n}\rightarrow\nf$, as $n\rightarrow\nf$ and with eigenvalues $\la_{j}(A_{n})$ and singular values $\si_{j}(A_{n})$, $j=1,\ldots,d_{n}$.
\begin{itemize}
\item We say that $\{A_{n}\}_{n}$ is {\em distributed as $f$ over $D$ in the sense of the eigenvalues,} and we write $\{A_{n}\}_{n}\sim_\la(f,D),$ if
\begin{equation}\label{distribution:eig}
\lim_{n\to\nf}\frac{1}{d_{n}}\sum_{j=1}^{d_{n}}F(\la_{j}(A_{n}))=
\frac1{\mu(D)} \int_{D} F(f(t))\d t,
\end{equation}
for every continuous function $F$ with compact support. In this case, we say that $f$ is the \emph{spectral symbol} of $\{A_{n}\}_{n}$.

\item We say that $\{A_{n}\}_{n}$ is {\em distributed as $f$ over $D$ in the sense of the singular values,} and we write $\{A_{n}\}_p\sim_\si(f,D),$ if
\begin{equation}\label{distribution:sv}
\lim_{n\to\nf}\frac{1}{d_{n}}\sum_{j=1}^{d_{n}}F(\si_{j}(A_{n}))=
\frac1{\mu(D)} \int_{D} F(|f(t)|)\d t,
\end{equation}
for every continuous function $F$ with compact support. In this case, we say that $f$ is the \emph{symbol} of $\{A_{n}\}_{n}$ in the sense of the singular values.
\end{itemize}
\end{definition}

Throughout the paper, when the domain can be easily inferred from the context, we replace the notation $\{A_{n}\}_{n}\sim_{\la,\si}(f,D)$ with $\{A_{n}\}_{n}\sim_{\la,\si} f$. A noteworthy result due to Tilli \cite{Ti98a} and Tyrtyshnikov and Zamarashkin \cite{TyZa98} is the following

\begin{theorem}\label{toeplitz distribution}
Let $f\in L^{1}([-\pi,\pi])$, then $\{T_{n}(f)\}_{{n}}\sim_\si(f,[-\pi, \pi]).$ If $f$ is a real-valued function almost everywhere,
then $\{T_{n}(f)\}_{{n}}\sim_\la(f,[-\pi, \pi]).$
\end{theorem}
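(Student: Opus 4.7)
The plan is to prove the singular-value statement first and deduce the eigenvalue statement using Hermiticity. I would proceed in three stages, enlarging the class of admissible symbols at each step by a perturbation and approximation argument: trigonometric polynomials, then $L^\infty([-\pi,\pi])$, and finally $L^1([-\pi,\pi])$.

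Stage one (polynomials). For a trigonometric polynomial $p$ of fixed degree $m$, I would compute $\tr\big((T_n(p)^\ast T_n(p))^k\big)$ for each positive integer $k$ and compare it with $\tfrac{n}{2\pi}\int_{-\pi}^{\pi}|p(\tht)|^{2k}\d\tht$. The key observation is that $T_n(p)T_n(q)-T_n(pq)$ has rank at most $2m$, the discrepancy being concentrated in the top-left and bottom-right corners of size $m\times m$. Iterating, one obtains $\tr\big((T_n(p)^\ast T_n(p))^k\big)=\tfrac{n}{2\pi}\int_{-\pi}^{\pi}|p|^{2k}\d\tht+O_k(1)$, so dividing by $n$ yields (\ref{distribution:sv}) for every test function $F(\si)=\si^{2k}$; by the Weierstrass approximation theorem this suffices for all continuous $F$ of compact support.

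Stage two ($L^\infty$). Given $f\in L^\infty$, I would approximate $f$ in $L^2$ by trigonometric polynomials $p_\eps$. Since $T_n(f)-T_n(p_\eps)=T_n(f-p_\eps)$ and Parseval yields $\|T_n(f-p_\eps)\|_F^2\le n\,\|f-p_\eps\|_{L^2}^2$, Mirsky's inequality bounds $\tfrac{1}{n}\sum_j|\si_j(T_n(f))-\si_j(T_n(p_\eps))|^2$. Combined with uniform continuity of $F$ on the range of $|f|$, this lets me interchange the limits in $\eps$ and $n$ and obtain (\ref{distribution:sv}) for bounded $f$.

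Stage three ($L^1$) is the main obstacle. Following Tilli and Tyrtyshnikov--Zamarashkin, I would split $f=f^{(N)}+r^{(N)}$, where $f^{(N)}=f\cdot\chi_{\{|f|\le N\}}\in L^\infty$ and $\|r^{(N)}\|_{L^1}\to 0$ as $N\to\nf$; stage two handles $f^{(N)}$. The delicate point is that $T_n(r^{(N)})$ cannot be controlled in operator norm, only in Schatten $1$-norm: using the trivial $L^\infty$ bound on the Fourier coefficients of $r^{(N)}$, one obtains $\sum_j \si_j(T_n(r^{(N)}))\le C\,n\,\|r^{(N)}\|_{L^1}$. A Ky Fan-type inequality then shows that the empirical singular-value distributions of $T_n(f)$ and $T_n(f^{(N)})$ agree in the $n\to\nf$ limit up to an error that vanishes as $N\to\nf$, giving (\ref{distribution:sv}) for $f\in L^1$. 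For the eigenvalue statement (\ref{distribution:eig}), real-valuedness of $f$ a.e.\ makes $T_n(f)$ Hermitian, and the same three-stage scheme applies with Mirsky replaced by Hoffman--Wielandt at stage two and by the Ky Fan inequality for Hermitian eigenvalues at stage three.
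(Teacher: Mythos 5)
The paper itself does not prove Theorem \ref{toeplitz distribution}; it cites it to Tilli \cite{Ti98a} and Tyrtyshnikov--Zamarashkin \cite{TyZa98}. Your three-stage plan (moments for polynomials; $L^\infty$ via a Frobenius/Hilbert--Schmidt perturbation; $L^1$ via a trace-class perturbation) is essentially the route taken in those references and in the subsequent GLT literature, so the architecture is right. Stages one and two are sound.

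There is, however, a genuine gap in Stage three. You assert that the trivial $L^\infty$ bound on the Fourier coefficients, $|\hat r^{(N)}_k|\le\frac{1}{2\pi}\|r^{(N)}\|_{L^1}$, yields $\sum_j\si_j(T_n(r^{(N)}))\le C\,n\,\|r^{(N)}\|_{L^1}$. It does not. An entrywise bound $|a_{ij}|\le M$ on an $n\times n$ matrix controls the Frobenius norm only by $nM$, hence the trace norm only by $n^{3/2}M$, and this exponent is sharp (take a random $\pm M$ matrix, whose singular values concentrate at $\sqrt{n}\,M$). Thus your justification gives $\frac{1}{n}\|T_n(r^{(N)})\|_1=O(\sqrt{n}\,\|r^{(N)}\|_{L^1})$, which diverges as $n\to\nf$ for fixed $N$, and the interchange of limits collapses. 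The linear-in-$n$ trace-norm bound you want \emph{is} true, but it requires the Toeplitz structure rather than an entrywise estimate. The cleanest proof is via the rank-one integral representation
\[
T_n(g)=\frac{1}{2\pi}\int_{-\pi}^{\pi} g(\tht)\,v_n(\tht)v_n(\tht)^*\,\d\tht,
\qquad v_n(\tht)=\big(1,\e^{-\i\tht},\ldots,\e^{-\i(n-1)\tht}\big)^T,
\]
since $\|v_n(\tht)v_n(\tht)^*\|_1=\|v_n(\tht)\|^2=n$ for every $\tht$, whence the triangle inequality in the Schatten $1$-norm gives $\|T_n(g)\|_1\le\frac{n}{2\pi}\|g\|_{L^1}$. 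With that bound in hand, Mirsky's trace-norm perturbation inequality $\sum_j|\si_j(A)-\si_j(B)|\le\|A-B\|_1$ (and Lidskii's Hermitian analogue for the eigenvalue statement) closes your argument as intended. An alternative, closer in spirit to Tyrtyshnikov--Zamarashkin, is to split $T_n(r^{(N)})$ into a low-rank part (the number of singular values exceeding $\eps$ is at most $\frac{1}{\eps}\|T_n(r^{(N)})\|_1$) plus a part of small operator norm, i.e.\ the ``approximating class of sequences'' device; either way, the missing ingredient is precisely the trace-norm bound above.
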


In the sequel, we introduce the $\glt$ class, a $\ast$-algebra of matrix-sequences containing Toeplitz matrix-sequences. The formal definition of $\glt$ matrix-sequences is rather technical and can be found in the scalar unilevel, scalar multilevel, block unilevel, block multilevel in the following books and revue papers \cite{BaGa20b,BaGa20a,GaSe17,GaSe18}, respectively. The original construction is involved and needs a whole coherent set of definitions and mathematical objects; see \cite{Se03,Se06,Ti98b}. However, in the writing of the books and the reviews, the authors realized that the mathematical construction is equivalent to a set of operative axioms that can be used conveniently, in practice, for deciding if a given matrix-sequence is of $\glt$ type and for computing the related symbol. Therefore, we just give and briefly report and discuss four of these axioms of the $\glt$ class, which are sufficient for our purposes. The current formulation is taken from \cite{GaSe17}.

Throughout, we use the following notation
\[
\{A_{n}\}_{n}\glteq { \ka(x,\tht)},\qquad \ka\colon[0,1]\times[-\pi,\pi]\rightarrow\bC,
\]
to say that the sequence $\{A_{n}\}_{n}$ is $\glt$ sequence with $\glt$ symbol $\ka(x,\tht)$.

Here we list four main features of $\glt$ sequences.
\begin{itemize}
\item[{\bf GLT 1}] Let $\{A_{n}\}_{n}\glteq\ka$ with a function $\ka\colon G\rightarrow \bC$, $G=[0,1]\times[-\pi,\pi]$, then $\{A_{n}\}_{n}\sim_\si(\ka,G)$. If the matrices $A_{n}$ are Hermitian, then it also holds that $\{A_{n}\}_{n}\sim_\la(\ka,G)$.
\item[{\bf GLT 2}] The set of $\glt$ sequences forms a $\ast$-algebra, i.e., it is closed under linear combinations, products, conjugation, but also inversion when the symbol is invertible a.e. In formulae, let $\{ A_{n} \}_{n} \glteq \ka_{1}$ and $\{ B_{n} \}_{n} \glteq \ka_{2}$, then
\begin{itemize}
\item[$\bullet$] $\{\al A_{n} + \be B_{n}\}_{n} \glteq \al\ka_{1}+\be \ka_{2}, \quad \al, \be \in \bC;$
\item[$\bullet$] $\{A_{n}B_{n}\}_{n} \glteq \ka_{1} \ka_{2};$
\item[$\bullet$] $\{ A_{n}^{*} \}_{n} \glteq {\ka^*_{1}};$
\item[$\bullet$] $\{A^{-1}_{n}\}_{n} \glteq \ka_{1}^{-1}$ provided that $\ka_{1}$ is invertible a.e.
\end{itemize}
\item[{\bf GLT 3}] Any sequence of Toeplitz matrices $\{ T_{n}(f) \}_{n}$ generated by $f \in L^{1}([-\pi, \pi])$ is a $\glt$ matrix-sequence with symbol $\ka(x, \tht) = f(\tht)$. For a Riemann integrable function $a$ defined on $[0,1]$, the corresponding diagonal sampling matrix-sequence
\[
\{ D_{n}(a) \}_{n}
\]
is a $\glt$ matrix-sequence with symbol $\ka(x,\tht) = a(x)$ and entries given by $[D_{n}(a)]_{j,j}=a(\frac{j}{n})$, $j=1,\ldots,n$.
\item[{\bf GLT 4}] Let $\{A_{n}\}_{n}\sim_\si 0$. We say that $\{A_{n}\}_{n}$ is a \emph{zero-distributed matrix-sequence}. Every zero-distributed matrix-sequence is a $\glt$ sequence with symbol $0$ and viceversa, i.e., $\{A_{n}\}_{n}\sim_\si0$ $\iff$ $\{A_{n}\}_{n}\glteq 0$.
\end{itemize}

If $f$ is smooth enough, an informal interpretation of the limit relation \eqref{distribution:eig} (resp. \eqref{distribution:sv}) is that when $n$ is sufficiently large, then the eigenvalues (resp. singular values) of $A_{n}$ can be approximated by a sampling of $f$ (resp. $|f|$) on a uniform equispaced grid of the domain $D$, up to at most few outliers. Often this approximation is good enough: the notion of $\glt$ momentary symbols has been introduced for obtaining a more accurate approximation.

 \begin{definition}[$\glt$ momentary symbols] \label{def:momentarysymbols}
Let $\{X_{n}\}_{n}$ be a matrix-sequence and assume that there exist matrix-sequences $\{A_{n}^{(j)}\}_{n}$, scalar sequences $c_{n}^{(j)}$, $j=0,\ldots,t$,
and measurable functions $f_{j}$ defined over $[-\pi,\pi]\times [0,1]$, $t$ nonnegative integer independent of $n$, such that
\begin{eqnarray*}
&&\Big\{\frac{A_{n}^{(j)}}{ c_{n}^{(j)}}\Big\}_{n} \glteq f_{j}, \\[2ex]
&&c_{n}^{(0)}=1,\quad c_{n}^{(s)}=o(c_{n}^{(r)}),\quad t\ge s>r, \\
&&\{X_{n}\}_{n}=\{A_{n}^{(0)}\}_{n} + \sum_{j=1}^t \{A_{n}^{(j)}\}_{n}.
\end{eqnarray*}
Then,
\begin{equation*}
f_{n}=f_0+ \sum_{j=1}^t c_{n}^{(j)} f_{j}
\end{equation*}
is defined as the $\glt$ momentary symbol for $X_{n}$ and $\{f_{n}\}$ is the sequence of $\glt$ momentary symbols for the matrix-sequence $\{X_{n}\}_{n}$.

\end{definition}

Of course, in line with \cite{BaGa20b,BaGa20a}, the momentary symbols could be matrix-valued with a number of variables equal to $2d$ and domain $[-\pi,\pi]^d\times [0,1]^d$ if the basic matrix-sequences appearing in Definition \ref{def:momentarysymbols} are, up to proper scaling, matrix-valued and multilevel $\glt$ matrix-sequences.

Clearly there is an immediate link with the $\glt$ theory stated in the next result, but many other connections should be investigated.

\begin{theorem}
$\{X_{n}\}_{n}\glteq f_0$ and $
\lim_{n\to \nf} f_{n}=f_0$ uniformly on the definition domain.
\end{theorem}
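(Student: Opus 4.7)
The plan is to split the theorem into its two assertions and handle each using the axioms \textbf{GLT 1}--\textbf{GLT 4} plus elementary estimates, with no need to revisit the underlying construction of GLT matrix-sequences.

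For the GLT identification $\{X_{n}\}_{n}\glteq f_0$, I would first rewrite each higher-order summand as a product. Namely, for $j\ge1$, write
\[
A_{n}^{(j)}=(c_{n}^{(j)}I_{n})\cdot\frac{A_{n}^{(j)}}{c_{n}^{(j)}},
\]
so that by the definition of momentary symbols the second factor is a GLT matrix-sequence with symbol $f_{j}$. Next I would observe that the matrix-sequence $\{c_{n}^{(j)}I_{n}\}_{n}$ has all singular values equal to $|c_{n}^{(j)}|$, and since $c_{n}^{(j)}=o(c_{n}^{(0)})=o(1)$ we have $c_{n}^{(j)}\to 0$. Hence for any continuous compactly supported $F$ the left-hand side of \eqref{distribution:sv} reduces to $F(|c_{n}^{(j)}|)\to F(0)$, i.e. $\{c_{n}^{(j)}I_{n}\}_{n}\sim_{\si}0$. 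By \textbf{GLT 4} this forces $\{c_{n}^{(j)}I_{n}\}_{n}\glteq 0$. Now I would invoke the $\ast$-algebra property \textbf{GLT 2}: the product of a sequence with symbol $0$ and one with symbol $f_{j}$ has symbol $0\cdot f_{j}=0$, so $\{A_{n}^{(j)}\}_{n}\glteq 0$ for every $j\ge1$. Finally, since $\{A_{n}^{(0)}\}_{n}\glteq f_0$ (using $c_{n}^{(0)}=1$) and GLT sequences are closed under finite sums, the decomposition $X_{n}=A_{n}^{(0)}+\sum_{j=1}^{t}A_{n}^{(j)}$ yields $\{X_{n}\}_{n}\glteq f_0$.

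For the uniform convergence $f_{n}\to f_0$, the estimate
\[
|f_{n}(x,\tht)-f_0(x,\tht)|\le\sum_{j=1}^{t}|c_{n}^{(j)}|\,|f_{j}(x,\tht)|
\]
shows that, provided the momentary contributions $f_{j}$ are essentially bounded on $[-\pi,\pi]\times[0,1]$ (which is the situation actually used in the paper, since the $f_{j}$ arise as GLT symbols of Toeplitz/diagonal-sampling building blocks associated with bounded generating symbols), each term tends to $0$ because $c_{n}^{(j)}\to 0$; summing the finitely many terms yields the uniform bound $\|f_{n}-f_0\|_{\infty}\to 0$. If one wishes to drop the boundedness assumption, I would replace uniform convergence by uniform convergence on compact subsets of $[-\pi,\pi]\times[0,1]$ where each $f_{j}$ is bounded, and the same estimate applies.

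The only genuinely delicate point is the reduction of $\{A_{n}^{(j)}\}_{n}$ to a zero-distributed sequence: it is tempting to argue directly from $\sigma_{k}(A_{n}^{(j)})=|c_{n}^{(j)}|\,\sigma_{k}(A_{n}^{(j)}/c_{n}^{(j)})$, but the singular values of a GLT sequence need not be uniformly bounded (outliers are allowed), so the clean route really is through the product factorization combined with \textbf{GLT 2} and \textbf{GLT 4}, as above. Everything else is straightforward bookkeeping.
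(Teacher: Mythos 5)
Your proof is correct, and since the paper states this theorem without proof (remarking only that the link is \emph{immediate}), the natural derivation is precisely the one you give: for $j\ge 1$ factor $A_n^{(j)} = (c_n^{(j)} I_n)\cdot\bigl(A_n^{(j)}/c_n^{(j)}\bigr)$, observe that $c_n^{(j)}=o(c_n^{(0)})=o(1)$ makes $\{c_n^{(j)}I_n\}_n$ zero-distributed so that \textbf{GLT~4} gives $\{c_n^{(j)}I_n\}_n\glteq 0$, invoke \textbf{GLT~2} for the product to get $\{A_n^{(j)}\}_n\glteq 0$, and close under the finite sum to obtain $\{X_n\}_n\glteq f_0$. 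You are also right to reject the naive route through $\sigma_k(A_n^{(j)})=|c_n^{(j)}|\sigma_k(A_n^{(j)}/c_n^{(j)})$: since GLT symbols need not be bounded, the rescaled singular values are not uniformly controlled, and the clean argument really is the algebraic one via \textbf{GLT~2} and \textbf{GLT~4}.

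Your remark on the second claim is likewise well-placed: uniform convergence of $f_n$ to $f_0$ on all of $[-\pi,\pi]\times[0,1]$ genuinely requires the $f_j$ to be essentially bounded, which Definition~\ref{def:momentarysymbols} does not impose, although it holds for all the paper's applications (Toeplitz building blocks with bounded generating functions and diagonal samplings of Riemann-integrable coefficients). One small wrinkle in your proposed fallback: the definition domain $[-\pi,\pi]\times[0,1]$ is itself compact, so ``uniform convergence on compact subsets'' is not a weakening. If some $f_j$ were unbounded, the right weakening would be uniform convergence off sets of arbitrarily small measure (or, more crudely, convergence in measure); as stated, the theorem is implicitly assuming $\|f_j\|_\infty<\infty$ for each $j$.
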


Here by making reference to the approximations by Finite Differences, for a fixed positive integer $l$, we could consider operators of the form
\begin{equation}\label{mixed orders}
\sum_{s=0}^{l} (-1)^s\al_s\frac{\d ^{\,2q}}{\d x^{2q}}
\end{equation}
which, by linearity of the approximation technique of the involved operators, and by (\ref{f_q}), give raise to Toeplitz structures of the type in (\ref{eigs of superposition}) with the expression
\[
T_{n}\Big(\sum_{s=0}^l \al_s h^{2(l-s)} f_s\Big).
\]
In perfect analogy, in the case where the approximation is obtained via Isogeometric Analysis, for a fixed positive integer $l$, we find
\[
T_{n}\Big(\sum_{s=0}^l \al_s h^{2(l-s)} g_s\Big),
\]
taking into account of (\ref{g_q}) and again the linearity of the approximation technique and of the considered operators. In both cases it is evident that the related matrix-sequences have $\sum_{s=0}^l \al_s h^{2(l-s)} F_s$ as $\glt$ momentary symbols in the sense of Definition \ref{def:momentarysymbols}, with $F_s$ being either $f_s$ or $g_s$, $s=0,1,\ldots,l$. In both cases we are interested to use the related $\glt$ momentary symbols in terms of superposition effect for computing in a fast way the eigenvalues
\begin{equation}\label{eigs of superposition example}
\la_{j}\Big(T_{n}\Big(\sum_{s=0}^l \al_s h^{2(l-s)} F_s\Big)\Big).
\end{equation}
Notice again that (\ref{eigs of superposition example}) is a special instance of the general problem indicated in (\ref{eigs of superposition}).

\section{Main results}\label{sec:Main}

For a constant $\al\ge0$, the well-known weighted Wiener algebra $W^{\al}$ is the collection of all functions $f\colon[0,2\pi]\to\bC$ whose Fourier coefficients $\hat f_{j}$ satisfy
\[\|f\|_{\al}\coloneqq\sum_{j=-\nf}^{\nf} |\hat f_{j}|(|j|+1)^{\al}<\nf.\]
We address real-valued symbols $f$ in $W^{\al}$, tracing out a simple loop and satisfying the following conditions:

\begin{enumerate}[(i)]
\item The range of $f$ is a segment $[0,\mu]$ with $\mu>0$.
\item $f(0)=f(2\pi)=0$, $f'(0)=f'(2\pi)=0$, and $f''(0)=f''(2\pi)>0$.
\item There is a $\si_0\in(0,2\pi)$ such that $f(\si_0)=\mu$, $f'(\si)>0$ for $0<\si<\si_0$, $f'(\si)<0$ for $\si_0<\si<2\pi$, $f'(\si_0)=0$, and $f''(\si_0)<0$.
\end{enumerate}

The collection of all these symbols is called the \textit{simple loop class} and is denoted by $\SL^{\al}$.
In this note we consider symmetric symbols in $\SL^{\al}$, that is $\si_0=\pi$ and $f(s)=f(2\pi-s)$ for each $s\in[0,\pi]$. Obviously, it is enough to study such a symbols in the interval $[0,\pi]$.

For every $\la\in[0,\mu]$ there exists a unique $s\in[0,\pi]$ satisfying $f(s)=\la$, and the symbol $f-\la$ has 2 zeros: $\pm s$, implying that the Toeplitz operator $T(f-\la)$ is not invertible. According to the simple-loop method (see~\cite{BoGr17}), by considering
\begin{eqnarray}\label{eq:b}
b_{f}(\si,s)&\coloneqq&\frac{f(\si)-f(s)}{4\sin\big(\mfrac{\si-s}{2}\big)\sin\big(\mfrac{\si+s}{2}\big)}\notag\\
&=&\frac{f(\si)-f(s)}{2(\cos(s)-\cos(\si))}\qquad(\si\in[0,2\pi],\ s\in[0,\pi]),
\end{eqnarray}
we obtain a real and continuous function, which is also bounded away from zero. The resulting operator $T(b_{f}(\cdot,s))$ is invertible and therefore, since the finite section method can be applied (see~\cite{BoSi99} for example), the related finite Toeplitz matrices $T_{n}(b_{f}(\cdot,s))$ are also invertible. Note that $b_{f}$ can be thought of as the quotient between $f-\la$ and $4\sin(\frac{\si-s}{2})\sin(\frac{\si+s}{2})$, which is similar to the preconditioning process of the ill-conditioned matrix $T_{n}(f-\la)$ used for example in \cite{DiBFi93,Se95}; for a general account on preconditioning in a Toeplitz setting see \cite{Ng04,GaSe17} and references therein.

For a function $u$ with a singularity at some point in the interval $I$, let $\dint_{I} u(x)\d x$ be the Cauchy principal value of the singular integral $\int_{I}u(x)\d x$. The function $b_{f}$ admits the so called Wiener--Hopf factorization $b_{f}=[b_{f}]_-[b_{f}]_+$ (index zero) with
\[[b_{f}]_{\pm}(t,s)\coloneqq \exp\Big\{\frac{1}{2}\log b_{f}(t,s)\pm\frac{1}{2\pi\i}\dint_{\bT}\frac{\log b_{f}(\tau,s)}{\tau-t}\d \tau\Big\},\]
where $t=\e^{\i\si}$ and $\si\in[0,2\pi]$.
We recall that the Wiener--Hopf factorization (also called method or decomposition) was introduced by N. Wiener and E. Hopf in 1931. What we call Wiener--Hopf factorization has its origin in the work of Gakhov \cite{Ga37}, but Mark Krein \cite{Kr58} was the first to understand the operator theoretic essence and its algebraic background, and to present it in a clear way; for a nice and modern explanation see \cite[\S1.4]{BoGr05}.

As in~\cite{BoBo15a, BoGr17}, we define the function $\eta_{f}\colon[0,\pi]\to\bR$ by
\begin{eqnarray}\label{eq:eta}
\eta_{f}(s)&\coloneqq&\frac{1}{4\pi}\dint_0^{2\pi}\frac{\log b_{f}(\si,s)}{\tan\big(\mfrac{\si-s}{2}\big)}\d \si-\frac{1}{4\pi}\dint_0^{2\pi}\frac{\log b_{f}(\si,s)}{\tan\big(\mfrac{\si+s}{2}\big)}\d \si\notag\\
&=&\frac{\sin(s)}{2\pi}\dint_0^{2\pi}\frac{\log b_{f}(\si,s)}{\cos(s)-\cos(\si)}\d \si.
\end{eqnarray}
Now \cite[Theorem 2.3]{BoGr17} tells us, in particular, that the eigenvalues of $T_{n}(f)$ are given by
\begin{equation}\label{eq:lamain}
\la_{j}(T_{n}(f))=f(d_{j,n})+\sum_{\ell=1}^{\lfloor\al\rfloor}c_{\ell}(d_{j,n})\,h^{\ell}+E_{j,n,\al},
\end{equation}
where
\begin{itemize}
\item the eigenvalues of $T_{n}(f)$ are arranged in nondecreasing order;
\item $h\coloneqq\frac{1}{n+1}$ and $d_{j,n}\coloneqq\pi j h$;
\item the coefficients $c_{\ell}$ depend only on $f$ and can be found explicitly, for example
\begin{equation}\label{eq:c12}
c_{1}=-f'\eta_{f},\qquad c_{2}=\frac{1}{2}f''\eta_{f}^{2}+f'\eta_{f}\eta_{f}';
\end{equation}
\item $E_{j,n,\al}=O(h^{\al})$ is the remainder (error) term, which satisfies the bounding $|E_{j,n,\al}|\le \ka_{\al}h^{\al}$ for some constant $\ka_{\al}$ depending only on $\al$ and $f$.
\end{itemize}

For $f,g\in\SL^{\al}$ and a constant $\be\in\bR$, we investigate the relationship between the eigenvalues $\la_{j}(T_{n}(f))$, $\la_{j}(T_{n}(g))$, and $\la_{j}(T_{n}(f+\be g))$. From \eqref{eq:lamain} we easily obtain
\[\la_{j}(T_{n}(f+\be g))=f(d_{j,n})+\be g(d_{j,n})+O(h).\]
Indeed, as a challenge in the field, we are looking for a more detailed result involving a complete expansion and a real constant $\be_{n}$ depending on $n$.
The symbol $f+\be_{n} g$ depends on $n$, as a consequence the actual simple-loop method can not be applied. However, under proper adjustments, the quoted technique can be used when $\be_{n}$ is $h$ or $h^h$, see Theorems \ref{th:SLh} and \ref{th:SLhh}.

We start with the symbol $f+gh$, that is $\be_{n}=h$. Consider the function
\[\psi(s)\coloneqq\frac{\sin(s)}{2\pi}\dint_0^{2\pi}\frac{b_{g}(\si,s)}{b_{f}(\si,s)(\cos(s)-\cos(\si))}\d \si.\]
\begin{theorem}\label{th:SLh}
Let $f$ and $g$ be two symmetric symbols in $\SL^{\al}$ with $\al\ge2$. Then
\[\la_{j}(T_{n}(f+g h))=f(d_{j,n})+\sum_{\ell=1}^{\lfloor\al\rfloor}\Psi_{\ell}(d_{j,n}) h^{\ell}+E_{j,n,\al},\]
where $\Psi_{\ell}$ are bounded functions from $[0,\pi]$ to $\bR$ depending only on $f,g$ that can be expressed explicitly. For instance
\begin{eqnarray*}
\Psi_{1}&=&g-f'\eta_{f},\\
\Psi_{2}&=&\frac{1}{2}f''\eta_{f}^{2}+f'\eta_{f}\eta_{f}'-f'\psi-g'\eta_{f}.
\end{eqnarray*}
The remainder $E_{j,n,\al}=O(h^{\al})$ satisfies the bounding $|E_{j,n,\al}|\le \ka_{\al}h^{\al}$ for some constant $\ka_{\al}$ depending only on $\al$, $f$, and $g$.
\end{theorem}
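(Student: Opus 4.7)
My plan is to treat $F := f + g h$ as a one-parameter family of symmetric simple-loop symbols, apply the known expansion \eqref{eq:lamain} of \cite[Theorem 2.3]{BoGr17} to $F$, and then re-expand the $F$-dependent coefficients as power series in $h$, collecting terms of the same total order. The first step is to verify that for $h$ sufficiently small, $F$ still lies in $\SL^{\al}$: since $f,g\ge 0$ are symmetric with $f(0)=g(0)=0$, $f''(0)>0$, and $f''(\pi)<0$, conditions (i)--(iii) are preserved by the perturbation $g h$ for $h$ small (they define an open subset of $W^{\al}$ in the relevant norm), and $\|F\|_{\al}\le\|f\|_{\al}+h\|g\|_{\al}<\nf$. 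Symmetry transfers trivially from $f$ and $g$ to $F$.

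\textbf{Key computations.} The critical observation is the linearity in the symbol argument of $b_{\cdot}(\si,s)$ from \eqref{eq:b}: $b_F(\si,s)=b_f(\si,s)+h\,b_g(\si,s)$, which remains uniformly bounded away from zero on $[0,2\pi]\times[0,\pi]$ for $h$ small. Taylor-expanding $\log b_F=\log b_f+h\,b_g/b_f+O(h^2)$ and plugging into the integral \eqref{eq:eta} gives, by linearity of the Cauchy principal-value operator,
\[\eta_F(s)=\eta_f(s)+h\,\psi(s)+O(h^2),\]
with $\psi$ exactly the function defined before the theorem; higher-order analogues $\psi_k$ arise from the higher Taylor terms $(h\,b_g/b_f)^k/k$. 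I then apply \eqref{eq:lamain} to $F$, substitute $F=f+g h$, $F'=f'+g' h$, $F''=f''+g'' h$, $\eta_F=\eta_f+h\psi+O(h^2)$, $\eta_F'=\eta_f'+h\psi'+O(h^2)$ into the formulas \eqref{eq:c12} for $c_1^F,c_2^F$ and their higher analogues, and collect powers of $h$. A direct but routine calculation confirms the $h^1$-coefficient is $g-f'\eta_f=\Psi_1$ and the $h^2$-coefficient is $\tfrac12 f''\eta_f^{2}+f'\eta_f\eta_f'-f'\psi-g'\eta_f=\Psi_2$; the same mechanism produces $\Psi_{\ell}$ for $\ell\ge 3$.

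\textbf{Main obstacle.} The delicate point is to ensure that the simple-loop theorem applies \emph{uniformly in $h$} to the $n$-dependent symbol $F$, so that the error constant $\ka_{\al}^{F}$ remains bounded as $h\to 0$; otherwise the final remainder could fail to be $O(h^{\al})$. This requires revisiting the proof of \cite[Theorem 2.3]{BoGr17} to check that $\ka_{\al}$ depends on the symbol only through quantities such as $\|F\|_{\al}$, $\min b_F$, $|F''(0)|$, $|F''(\pi)|$, and $\min_{\si\in(0,\pi)}|F'(\si)/\sin\si|$, all of which are continuous with respect to the $W^{\al}$ topology. Since $F\to f$ in $W^{\al}$ as $h\to 0$, each such quantity stays in a fixed neighborhood of its value at $f$, so $\ka_{\al}^{F}\le 2\ka_{\al}^{f}$ for $h$ small enough. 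A secondary technical point is to justify the Taylor expansion of the singular integral $\eta_F$ with remainders uniform in $s\in[0,\pi]$; this rests on the Hölder continuity of $b_F$ in $\si$ combined with its uniform positive lower bound, which allow exchanging the expansion of $\log$ with the principal-value integration.
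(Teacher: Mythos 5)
Your computational route is correct — Taylor-expanding $\log b_{F}$ with $F=f+gh$, obtaining $\eta_{F}=\eta_{f}+\psi h+O(h^{2})$, and re-expanding $F$, $F'$, $F''$, $\eta_{F}$, $\eta_{F}'$ in the coefficient formulas \eqref{eq:c12} does reproduce $\Psi_{1}=g-f'\eta_{f}$ and $\Psi_{2}=\frac12 f''\eta_{f}^{2}+f'\eta_{f}\eta_{f}'-f'\psi-g'\eta_{f}$, and you correctly identify the $n$-dependence of the symbol as the real obstacle. However, your overall strategy differs from the paper's in a way worth flagging. You propose to apply \cite[Theorem 2.3]{BoGr17} as a black box to the $n$-dependent symbol $F_n=f+gh$ and then re-expand the $F_n$-dependent coefficients $c_\ell^{F_n}$ in powers of $h$, relying on a uniformity-in-$n$ claim for the error constant $\ka_\alpha^{F_n}$. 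The paper explicitly rejects this route (``the actual simple-loop method can not be applied'') and instead reconstructs the method from its interior: it expands $\eta_{f+gh}$ first (Lemma \ref{lm:eta}), then rederives the secular equation $\frac{s}{h}+\eta_{f+\be_n g}(s)-R_2^{(n)}(s)=j\pi$ (Lemma \ref{lm:55}, with the modified $\Theta_k$ polynomial and explicit uniform bounds on $\|T_n^{-1}(b_{f+\be_n g}(\cdot,s))\|$ and on the Wiener--Hopf factor $[b_{f+\be_n g}]_+^{-1}$), sets up the contraction $\Phi_{j,n}(s)=d-\eta_f(s)h-\psi(s)h^2$, and iterates it twice via Banach's fixed point theorem, Taylor-expanding at each step. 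Your approach is more modular and, if the uniformity claim holds, logically shorter; but the verification that $\ka_\alpha$ depends continuously on the symbol through the quantities you list is itself the bulk of the technical work, and you leave it as a program (``this requires revisiting the proof \ldots'') rather than carrying it out. The paper's reconstruction is essentially that verification done explicitly, lemma by lemma. So: same underlying idea and the same key input ($\eta_{F}=\eta_f+\psi h+O(h^2)$), but your black-box-plus-re-expansion framing hides precisely the uniformity argument that the paper makes the centerpiece of the proof; filling that in would require roughly the same effort as the paper's route.
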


For the next result we will study the eigenvalues corresponding to the symbol $f+g h^h$, that is $\be_{n}=h^h$. Consider the function
\[\ph(s)\coloneqq\frac{\sin(s)}{2\pi}\dint_0^{2\pi}\frac{b_{g}(\si,s)}{(b_{f}(\si,s)+b_{g}(\si,s))(\cos(s)-\cos(\si))}\d \si.\]
\begin{theorem}\label{th:SLhh}
Let $f$ and $g$ be two symmetric symbols in $\SL^{\al}$ with $\al\ge2$. Then
\[\la_{j}(T_{n}(f+g h^h))=f(d_{j,n})+g(d_{j,n})+\sum_{\ell=1}^{\lfloor\al\rfloor}\sum_{k=0}^{\ell}\Ga_{\ell,k}(d_{j,n})h^{\ell}\log^k(h)+E_{j,n,\al},\]
where $\Ga_{\ell,k}$ are bounded functions from $[0,\pi]$ to $\bR$ depending only on $f, g$ that can be expressed explicitly. For instance
\begin{eqnarray*}
\Ga_{1,1}&\coloneqq&g,\\
\Ga_{1,0}&\coloneqq&-\eta_{f+g}(f'+g'),\\
\Ga_{2,2}&\coloneqq&\frac{1}{2}g,\\
\Ga_{2,1}&\coloneqq&-\ph(f'+g')-g'\eta_{f+g},\\
\Ga_{2,0}&\coloneqq&\frac{1}{2}\eta_{f+g}^{2}(f''+g'')+\eta_{f+g}\eta_{f+g}'(f'+g').
\end{eqnarray*}
The remainder (error) term $E_{j,n,\al}=O(h^{\al}|\log^{\al}(h)|)$ satisfies $|E_{j,n,\al}|\le \ka_{\al}h^{\al}|\log^{\al}(h)|$ for some constant $\ka_{\al}$ depending only on $\al$, $f$, and $g$.
\end{theorem}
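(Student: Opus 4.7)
The plan is to adapt the simple-loop machinery from \cite{BoGr17} to the $n$-dependent symbol $\ti f_{n}\coloneqq f+gh^h$ and then extract the double expansion in $h$ and $\log h$ from an expansion in the small perturbation parameter $\de_{n}\coloneqq h^h-1=\sum_{k\ge 1}\frac{(h\log h)^k}{k!}=O(h|\log h|)$.

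First I would check that $\ti f_{n}$ fits into the simple-loop framework uniformly for all sufficiently large $n$. Since $h^h\to 1$ and $f+g\in\SL^{\al}$ (it inherits symmetry, the range-and-monotonicity conditions, and the second-derivative positivity at $0,2\pi$ from its summands), the perturbed symbol $\ti f_{n}$ is a symmetric member of $\SL^{\al}$ with constants uniform in $n$; in particular
\[ b_{\ti f_{n}}(\si,s)=b_{f}(\si,s)+h^h b_{g}(\si,s) \]
inherits continuity, positivity and a uniform lower bound away from zero from $b_{f+g}=b_{f}+b_{g}$. Consequently, the Wiener--Hopf factorization and the construction of $\eta_{\ti f_{n}}$ go through, and the main result of \cite{BoGr17} yields
\[ \la_{j}(T_{n}(\ti f_{n}))=\ti f_{n}(d_{j,n})+\sum_{\ell=1}^{\lfloor\al\rfloor}c_{\ell}^{\ti f_{n}}(d_{j,n})\,h^{\ell}+\ti E_{j,n,\al}, \]
with $|\ti E_{j,n,\al}|\le \ti\ka_{\al}h^{\al}$, where $\ti\ka_{\al}$ is controlled by the uniform simple-loop estimates for $\ti f_{n}$.

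Next I would expand all the ingredients in powers of $\de_{n}$. Writing
\[ \log b_{\ti f_{n}}=\log(b_{f}+b_{g})+\log\bigl(1+\de_{n}b_{g}/(b_{f}+b_{g})\bigr), \]
a Taylor expansion of the logarithm combined with the integral formula \eqref{eq:eta} gives
\[ \eta_{\ti f_{n}}=\eta_{f+g}+\de_{n}\ph+O(\de_{n}^{2}), \]
with $\ph$ as in the statement. Plugging this into the explicit formulas for $c_{\ell}^{\ti f_{n}}$ from \cite{BoGr17} (for instance $c_{1}=-\ti f_{n}'\eta_{\ti f_{n}}$ and the $c_{2}$-analogue of \eqref{eq:c12}), together with $\ti f_{n}=f+g+\de_{n}g$ and $\ti f_{n}'=f'+g'+\de_{n}g'$, produces a polynomial expression in $\de_{n}$ whose coefficients depend only on $f$ and $g$. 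Substituting $\de_{n}=\sum_{k\ge 1}\frac{(h\log h)^{k}}{k!}$ and collecting contributions of the form $h^{\ell}\log^{k}(h)$ with $0\le k\le \ell\le\lfloor\al\rfloor$ produces the $\Ga_{\ell,k}$. In particular, the pure-log terms $\Ga_{\ell,\ell}=g/\ell!$ come directly from $\ti f_{n}(d_{j,n})=f(d_{j,n})+g(d_{j,n})+g(d_{j,n})(h^h-1)$, while mixed terms such as $\Ga_{2,1}=-\ph(f'+g')-g'\eta_{f+g}$ arise from cross-products between the $\de_{n}$-expansion of $c_{1}^{\ti f_{n}}$ and its $h$-factor.

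The hard part will be upgrading the $O(h^{\al})$ remainder of \cite{BoGr17} to $|\ti E_{j,n,\al}|=O(h^{\al}|\log h|^{\al})$. The constants in the simple-loop bound depend smoothly on the symbol through $b_{\ti f_{n}}$ and its derivatives, and propagating the $\de_{n}=O(h|\log h|)$ perturbation through these estimates, while truncating the $\de_{n}$-Taylor series at an appropriate order, yields a worst combination of order $h^{\al}|\log h|^{\al}$. Carrying out this bookkeeping rigorously, while retaining the explicit formulas for $\Ga_{\ell,k}$, is the principal technical load of the proof.
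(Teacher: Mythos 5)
Your plan is essentially sound and arrives at the same coefficients $\Ga_{\ell,k}$ as the paper, but you organize the argument differently. The paper does not invoke the eigenvalue expansion of \cite{BoGr17} as a black box for the $n$-dependent symbol $\ti f_n=f+gh^h$; instead it re-runs the fixed-point iteration from scratch with the $\eta$-expansion already substituted in. Concretely, after establishing a version of Lemma~\ref{lm:55} (which asserts that the eigenvalue equation $\frac{s}{h}+\eta_{f+\be_n g}(s)-R_2^{(n)}(s)=j\pi$ still holds for the $n$-dependent symbol with $\|R_2^{(n)}\|_{L_\nf}=o(h^{\al-1})$), the paper substitutes $\eta_{\ti f_n}=\eta_{f+g}+\ph\,h\log h+O(h^2\log^2 h)$ from Lemma~\ref{lm:eta}(ii), builds the contraction $\Phi_{j,n}(s)=d-\eta_{f+g}(s)h-\ph(s)h^2\log h$, iterates to obtain $\hat s_{j,n}^{(2)}=d-\eta_{f+g}(d)h+\{\eta_{f+g}(d)\eta_{f+g}'(d)-\ph(d)\log h\}h^2+O(h^3|\log h|^3)$, and finally recovers the eigenvalue by evaluating $\la_j=\ti f_n(s_{j,n})$. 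Your route instead writes down the eigenvalue formula of \cite{BoGr17} applied to $\ti f_n$ (with $n$-dependent $c_\ell^{\ti f_n}$ and $\eta_{\ti f_n}$) and then Taylor-expands everything in $\de_n=h^h-1$; this is more modular and reuses the explicit formulas $c_1=-f'\eta_f$, $c_2=\tfrac12 f''\eta_f^2+f'\eta_f\eta_f'$ directly. Both approaches hinge on the same two ingredients, the $\eta$-expansion and the uniform-in-$n$ validity of the simple-loop machinery.

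The one point you should not pass over so lightly is precisely that last ingredient. You write that ``the main result of \cite{BoGr17} yields'' the expansion for $T_n(\ti f_n)$ with a uniform constant $\ti\ka_\al$, but the result of \cite{BoGr17} is stated and proved for a \emph{fixed} symbol $f\in\SL^\al$; it does not automatically apply to a symbol that changes with $n$, and this is exactly the obstruction the paper flags (``as a consequence the actual simple-loop method can not be applied''). What is really needed is a re-examination of the constants in \cite{BoGr17} to check that they can be taken uniform over the family $\{\ti f_n\}_n$ -- the paper does this explicitly in the preamble to the proofs, bounding $\|T_n^{-1}(b_{f+\be_n g}(\cdot,s))\|$ uniformly from $\inf\{b_f+\be_n b_g\}$, establishing uniform boundedness of the Wiener--Hopf factors, and then stating the resulting Lemma~\ref{lm:55}. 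Your plan correctly flags the error-tracking as ``the principal technical load,'' but the uniformity issue is the real sticking point and deserves at least equal weight: without it, the step ``apply \cite{BoGr17} to $\ti f_n$'' is unjustified. Once you fill that in, the rest of your outline (expanding $\eta_{\ti f_n}$, $\ti f_n$ and the $c_\ell^{\ti f_n}$ in $\de_n$, substituting $\de_n=\sum_{k\ge1}\frac{(h\log h)^k}{k!}$, and collecting $h^\ell\log^k h$) does give the stated $\Ga_{\ell,k}$ and the claimed error magnitude.
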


\section{Proof of the main results}\label{sec:Main-Proof}

Our specific aim is to recreate the so called simple-loop method working with the symbol $f+\be_{n} g$ for two particular values of $\be_{n}$, i.e. $h$ and $h^h$. As we will show, in both cases we arrive at the important equation
\begin{equation}\label{eq:Main}
\frac{s}{h}+\eta_{f+\be_{n} g}(s)=\pi j+\De_{j,n,\al},
\end{equation}
where $\De_{j,n,\al}$ satisfies some smoothness and bounding conditions (the previous relation \eqref{eq:Main} is the key for the calculation of asymptotic expansions for the eigenvalues of several Toeplitz matrix-sequences, see for example \cite[Eq.\,4.15]{BoBo15a} and \cite[Eq.\,5.7]{BoGr17}). At this point we decided to expand $\eta_{f+\be_{n} g}$ into factors with coefficients not involving $n$, then we made a technical work claiming the function on the left side of \eqref{eq:Main} is a contraction and henceforth, we can use the Banach fixed point theorem in order to solve it for $s$. Thus, we start with the following technical result.

\begin{lemma}\label{lm:eta}
As $n\to\nf$, we have
\begin{enumerate}
\item[\textup{(i)}] $\eta_{f+gh}(s)=\eta_f(s)+\psi(s)h+O(h^{2})$,
\item[\textup{(ii)}] $\eta_{f+gh^h}(s)=\eta_{f+g}(s)+\ph(s) h\log(h)+O(h^{2}\log^{2}(h))$,
\end{enumerate}
where $\psi,\ph$ are $\lfloor\al\rfloor-1$ times continuously differentiable functions, given by
\begin{eqnarray*}
\psi(s)&\coloneqq&\frac{\sin(s)}{2\pi}\dint_0^{2\pi}\frac{b_{g}(\si,s)}{b_{f}(\si,s)(\cos(s)-\cos(\si))}\d \si,\\
\ph(s)&\coloneqq&\frac{\sin(s)}{2\pi}\dint_0^{2\pi}\frac{b_{g}(\si,s)}{(b_{f}(\si,s)-b_{g}(\si,s))(\cos(s)-\cos(\si))}\d \si,
\end{eqnarray*}
\end{lemma}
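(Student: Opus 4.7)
The whole argument hinges on a single elementary algebraic fact about the auxiliary function $b$ introduced in \eqref{eq:b}: from its definition $b$ is $\bR$-linear in its first argument, so $b_{f+gh}=b_f+h\,b_g$, and by writing $f+gh^h=(f+g)+g(h^h-1)$ also $b_{f+gh^h}=b_{f+g}+(h^h-1)b_g$. Since $f,g\in\SL^\al$, both $b_f$ and $b_g$ are continuous on $[0,2\pi]\times[0,\pi]$, $b_f$ is bounded away from zero, and their $s$-dependence inherits $\lfloor\al\rfloor-1$ continuous derivatives from the hypothesis $f,g\in W^\al$. Thus for $h$ small the denominators $b_f$ and $b_{f+g}$ stay bounded away from zero uniformly in $\si,s$, and I can freely take logarithms and apply Taylor series inside the principal value integrals.

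For part (i), I factor
\[
\log b_{f+gh}(\si,s)=\log b_f(\si,s)+\log\!\Bigl(1+h\,\tfrac{b_g(\si,s)}{b_f(\si,s)}\Bigr),
\]
expand the second term in powers of $h$ via $\log(1+x)=x-x^{2}/2+\cdots$, substitute into the definition \eqref{eq:eta} of $\eta_{f+gh}$, and integrate term by term. The order-$h$ contribution is precisely $\psi(s)\,h$, higher-order terms give $O(h^{2})$. For part (ii), I use the elementary expansion $h^h = \e^{h\log h} = 1 + h\log h + O(h^{2}\log^{2}h)$ as $h\to 0^+$, factor
\[
\log b_{f+gh^h}(\si,s)=\log b_{f+g}(\si,s)+\log\!\Bigl(1+(h^h-1)\,\tfrac{b_g(\si,s)}{b_{f+g}(\si,s)}\Bigr),
\]
and expand the second term in the small parameter $h^h-1=h\log h+O(h^{2}\log^{2}h)$. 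The leading contribution to $\eta_{f+gh^h}(s)-\eta_{f+g}(s)$ is therefore $(h\log h)\,\ph(s)$, with $\ph$ having denominator $b_{f+g}=b_f+b_g$ (what appears written as $b_f-b_g$ in the lemma statement is a typo, as is clear from this derivation and from the definition of $\ph$ stated just before Theorem \ref{th:SLhh}). The remainder is of order $h^{2}\log^{2}h$.

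The main obstacle is not the algebra but two regularity issues: uniform-in-$s$ control of the remainders, and $\lfloor\al\rfloor-1$ times continuous differentiability of $\psi$ and $\ph$. Both amount to controlling singular integrals of the form $\dint_0^{2\pi}\Ph(\si,s)/(\cos s-\cos\si)\,\d\si$ with smooth $\Ph$, and are handled as in \cite[Sec.\,3]{BoGr17}: interpret such an integral as a Hilbert-transform-type operator acting on $\Ph(\cdot,s)$, use its boundedness on the weighted Wiener algebra $W^\al$, and track the $s$-dependence through the chain rule applied to the quotients $b_g/b_f$ and $b_g/b_{f+g}$. The resulting constants depend only on $\|f\|_\al$ and $\|g\|_\al$, giving the stated regularity of $\psi,\ph$ and the uniformity of the $O(h^{2})$ and $O(h^{2}\log^{2}h)$ remainders.
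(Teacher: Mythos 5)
Your proposal matches the paper's own proof in essence: both decompose $\log b_{f+\beta_n g}$ around $\log b_f$ (resp.\ $\log b_{f+g}$) and Taylor-expand in the small parameter $h$ (resp.\ $h^h-1 = h\log h + O(h^2\log^2 h)$), then integrate term-by-term, invoking continuity of $s\mapsto b_f(\cdot,s)$, $s\mapsto b_g(\cdot,s)$ into $W^{\al-1}$ together with boundedness of the singular integral operator on the weighted Wiener algebra to justify the regularity of $\psi,\ph$ and the uniformity of the remainders. You are also right that the $b_f - b_g$ appearing in the lemma's statement of $\ph$ is a sign typo, as confirmed by the definition given before Theorem~\ref{th:SLhh} and by the expansion $\log(b_f+b_g h^h)=\log(b_f+b_g)+\frac{b_g}{b_f+b_g}h\log h + \cdots$ used in the paper's proof.
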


\begin{proof}
By \cite[Lemma 4.4]{BoGr17} we know that $s\mapsto b_{f}(\cdot,s)$ and $s\mapsto b_{g}(\cdot,s)$ act continuously from $[0,\pi]$ to $W^{\al-1}$, which combined with the fact that the singular integral operator is continuous over the weighted Wiener algebras, claims that $\psi,\ph$ are $\lfloor\al\rfloor-1$ times continuously differentiable functions. For the part (i), we use the expansion
\begin{eqnarray*}
\log(b_{f}+b_{g} h)&=&\log(b_{f})+\log'(b_{f})b_{g}h+\frac{1}{2}\log''(b_{f})b_{g}^{2}h^{2}+O(h^{3})\\
&=&\log(b_{f})+\frac{b_{g}}{b_{f}}h-\frac{b_{g}^{2}}{2b_{f}^{2}}h^{2}+O(h^{3}).
\end{eqnarray*}
From the definition of $\eta$ \eqref{eq:eta} we get
\begin{eqnarray*}
\eta_{f+gh}(s)-\eta_{f}(s)&=&\frac{\sin(s)}{2\pi}\dint_0^{2\pi} \frac{\log(b_{f}(\si,s)+b_{g} h(\si,s))-\log b_{f}(\si,s)}{\cos(s)-\cos(\si)}\d \si\\
&=&\psi(s) h+O(h^{2}),
\end{eqnarray*}
proving the first part.

For the part (ii), we use the expansion
\[h^h=\e^{h\log(h)}=1+h\log(h)+\frac{1}{2}h^{2}\log^{2}(h)+O(h^{3}|\log^{3}(h)|)\]
and we easily arrive at
\[b_{f}+b_{g} h^h=b_{f}+b_{g}+b_{g} h\log(h)+\frac{b_{g}}{2}h^{2}\log^{2}(h)+O(h^{3}|\log^{3}(h)|).\]
Note that for every $n$ the symbols $b_{f}+b_{g}h^h$ and $b_{f}+b_{g}$ are positive, bounded, and bounded away from zero. Thus we can apply the logarithm to the former and expand around the latter so that
\[\log(b_{f}+b_{g}h^h)=\log(b_{f}+b_{g})+\frac{b_{g}}{b_{f}+b_{g}}h\log(h)+\frac{b_{f}b_{g}}{2(b_{f}+b_{g})}h^{2}\log^{2}(h)+O(h^{3}|\log^{3}(h)|).\]
Finally, combining this expansion with \eqref{eq:eta}, the second part is proven.
\end{proof}

The following lemma can be derived directly from the expansion \eqref{eq:lamain}. It shows how the eigenvalues of the sum of two Toeplitz matrices are related with the individual ones, which is highly nontrivial given the inherent nonlinearity of the eigenvalues with respect to the entries of the considered matrix.

\begin{lemma}\label{lm:addition}
Let $f$ and $g$ be two symmetric symbols in $\SL^{\al}$ with $\al\ge2$. Then we have
\begin{eqnarray*}
\la_{j}(T_{n}(f+g))&=&\la_{j}(T_{n}(f))+\la_{j}(T_{n}(g))+\sum_{\ell=1}^{\lfloor\al\rfloor} Q_{\ell}(d_{j,n})h^{\ell}+E_{j,n,\al},
\end{eqnarray*}
where $Q_{\ell}$ are bounded functions from $[0,\pi]$ to $\bR$ depending only on $f, g$, that can be expressed explicitly. For example
\begin{eqnarray*}
Q_{1}&\coloneqq&f'(\eta_{f}-\eta_{f+g})+g'(\eta_{g}-\eta_{f+g}),\\
Q_{2}&\coloneqq&\frac{f''}{2}(\eta_{f+g}^{2}-\eta_{f}^{2})+\frac{g''}{2}(\eta_{f+g}^{2}-\eta_{g}^{2})\\
&&+f'(\eta_{f+g}\eta_{f+g}'-\eta_{f}\eta_{f}')+g'(\eta_{f+g}\eta_{f+g}'-\eta_{g}\eta_{g}').
\end{eqnarray*}
The remainder $E_{j,n,\al}=O(h^{\al})$ satisfies the bounding $|E_{j,n,\al}|\le \ka_{\al}h^{\al}$ for some constant $\ka_{\al}$ depending only on $\al$, $f$, and $g$.
\end{lemma}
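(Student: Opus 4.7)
The plan is to apply the single-symbol expansion \eqref{eq:lamain} three separate times --- to $\{T_{n}(f)\}_{n}$, $\{T_{n}(g)\}_{n}$, and $\{T_{n}(f+g)\}_{n}$ --- and then subtract the first two from the third. The leading symbol piece splits cleanly as $(f+g)(d_{j,n}) = f(d_{j,n}) + g(d_{j,n})$, accounting for the $\la_{j}(T_{n}(f)) + \la_{j}(T_{n}(g))$ part of the claim. The higher-order coefficient functions $c_{\ell}$ do not split linearly, because the Wiener--Hopf-type quantity $\eta_{f+g}$ is genuinely nonlinear in the symbol; the $Q_{\ell}$'s are defined precisely to absorb this discrepancy.

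First I would verify that $f+g$ itself lies in the symmetric subclass of $\SL^{\al}$, so that \eqref{eq:lamain} applies to it. Membership in the weighted Wiener algebra $W^{\al}$ is immediate because $W^{\al}$ is closed under sums. The simple-loop conditions (i)--(iii) transfer from the summands: $(f+g)(0)=(f+g)(2\pi)=0$ and $(f+g)'(0)=0$, with $(f+g)''(0)=f''(0)+g''(0)>0$; strict monotonicity of $f+g$ on $(0,\pi)$ follows from $f',g'>0$ there; the unique maximum sits at $\si_0=\pi$ with $(f+g)''(\pi)<0$. Symmetry is inherited coordinatewise because both symbols are assumed symmetric with common $\si_0=\pi$.

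Next I would write \eqref{eq:lamain} for each of the three symbols, indexing the coefficient functions by a superscript, and define
\[
Q_{\ell}\coloneqq c_{\ell}^{(f+g)}-c_{\ell}^{(f)}-c_{\ell}^{(g)}.
\]
Using the explicit formulas \eqref{eq:c12} together with the linearity of derivatives $(f+g)^{(k)}=f^{(k)}+g^{(k)}$, the $\ell=1$ contribution becomes
\[
Q_{1}=-f'\eta_{f+g}-g'\eta_{f+g}+f'\eta_{f}+g'\eta_{g}=f'(\eta_{f}-\eta_{f+g})+g'(\eta_{g}-\eta_{f+g}),
\]
matching the stated formula. An analogous expansion of $c_{2}=\tfrac{1}{2}f''\eta_{f}^{2}+f'\eta_{f}\eta_{f}'$ (and the corresponding objects for $g$ and $f+g$) yields the stated $Q_{2}$ after routine algebra. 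The new remainder $E_{j,n,\al}$ is the signed combination of the three individual remainders in \eqref{eq:lamain}; each is bounded by a constant multiple of $h^{\al}$, and so the combined remainder satisfies $|E_{j,n,\al}|\le\ka_{\al}h^{\al}$ with a constant depending only on $\al$, $f$, and $g$.

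The only point requiring real care is the verification that $f+g\in\SL^{\al}$, so that \eqref{eq:lamain} legitimately applies to it --- in particular, the strict second-derivative inequalities at $0,\pi$ and the uniqueness of the maximum must survive addition. Once the symmetry assumption aligns the maxima at a common $\si_0=\pi$ this is immediate; without that alignment the statement would fail. After this step the proof is essentially bookkeeping driven by \eqref{eq:lamain}, which is why the author remarks that the lemma is derived directly from the single-symbol expansion.
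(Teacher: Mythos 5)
Your proof is correct and follows essentially the same route as the paper's: apply the single-symbol expansion \eqref{eq:lamain} to $f$, $g$, and $f+g$, define $Q_\ell\coloneqq c_\ell^{(f+g)}-c_\ell^{(f)}-c_\ell^{(g)}$, and combine the three remainders. The one genuine value you add is the explicit check that $f+g$ belongs to the symmetric simple-loop class (strict second derivatives at $0,\pi$, unique interior maximum, symmetry with $\si_0=\pi$), a step the paper leaves tacit even though \eqref{eq:lamain} cannot be invoked for $T_n(f+g)$ without it; the paper instead spends its space introducing the auxiliary function $\rho=\eta_{f+g}-\eta_f-\eta_g$ and noting $b_{f+g}=b_f+b_g$, which is just a bookkeeping device for the same algebra you carry out directly.
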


\begin{proof}
Let $c_k^{f}$ $(k=1,2)$ be the function involved in the expansion \eqref{eq:lamain} associated with the symbol $f$. We recall that $\eta_{f+g}$ is associated with $f+g$. It is easy to show that $b=b_{f}+b_{g}$. The situation with $\eta_{f+g}$ is more delicate. Consider the function $\rho\coloneqq\eta_{f+g}-\eta_{f}-\eta_{g}$. From \eqref{eq:c12} we obtain
\begin{eqnarray*}
c_{1}^{f+g}&=&-f'\eta_{f+g}\\
&=&-(f'+g')(\eta_{f}+\eta_{g}+\rho)\\
&=&c_{1}^{f}+c_{1}^{g}+Q_{1},
\end{eqnarray*}
where $Q_{1}\coloneqq f'(\eta_{f}-\eta_{f+g})+g'(\eta_{g}-\eta_{f+g})$. Similarly
\begin{eqnarray*}
c_{2}^{f+g}&=&\frac{1}{2}f''\eta_{f+g}^{2}+f'\eta_{f+g}\eta_{f+g}'\\
&=&\frac{1}{2}(f''+g'')(\eta_{f}+\eta_{g}+\rho)^{2}+(f'+g')(\eta_{f}+\eta_{g}+\rho)(\eta_{f}'+\eta_{g}'+\rho')\\
&=&c_{2}^{f}+c_{2}^{g}+Q_{2},
\end{eqnarray*}
where
\begin{eqnarray*}
Q_{2}&\coloneqq&\frac{1}{2}f''(\eta_{f+g}^{2}-\eta_{f}^{2})+\frac{1}{2}g''(\eta_{f+g}^{2}-\eta_{g}^{2})\\
&&+f'(\eta_{f+g}\eta_{f+g}'-\eta_{f}\eta_{f}')+g'(\eta_{f+g}\eta_{f+g}'-\eta_{g}\eta_{g}').
\end{eqnarray*}
According to the simple-loop method, all the involved functions in the terms $Q_{\ell}$ are bounded (see \cite[\S 3]{BoBo15a}), thus they are also bounded. Finally, putting together the previous expressions for $c_{1}^{f+g}$ and $c_{2}^{f+g}$ in \eqref{eq:lamain}, the proof is over.
\end{proof}

For the proofs of the main theorems we need to mimic the complete calculation of the simple-loop method, but since most of the modifications are very simple, we decided to mention only the most relevant. From \eqref{eq:b} it is clear that $b_{f+\be_{n} g}=b_{f}+\be_{n} b_{g}$. Following \cite{BoGr17} we can see that all the results in Section 3 can be applied to the symbol $f+\be_{n} g$ for any real and positive $\be_{n}$ bounded uniformly for $n\in\bN$, and that all the results in Section 4 can be applied to $b_{f}$ and $b_{g}$, separately. One of the key ingredients of Section 5 is the function $\Tht_k(t,s)\coloneqq[T_k^{-1}(b(\cdot,s))\chi_0](t)$, where $\chi_0(t)\coloneqq 1$, who is a polynomial satisfying certain properties. After the adjustments, in \cite[Lemma 5.1]{BoGr17} we obtained
\[\Tht_k(t,s)=[\{T_k(b_{f}(\cdot,s))+\be_{n} T_k(b_{g}(\cdot,s))\}^{-1}\chi_0](t),\]
which is still a polynomial with the same properties, but its coefficients now depend on $n$.

After the implementation of the following minor changes, we was able to replicate the result in Lemma 5.4, even with the same remaining function $R_{1}^{(n)}$.
\begin{itemize}
\item To find a bound for the operator $\|T_{n}^{-1}(b_{f+\be_{n} g}(\cdot,s))\|$ in the Lebesgue space $\ell_{1}$, we proceed as follows, since the function $b_{f+\be_{n} g}=b_{f}+\be_{n} b_{g}$ is positive, bounded, and bounded away from zero, the Toeplitz matrix $T_{n}(b_{f+\be_{n} g})$ is positive definite and invertible, hence its operator norm equals its largest eigenvalue (which must be real and positive), and which is naturally bounded from below by $\inf\{b_{f+\be_{n} g}(s)\colon 0\le s\le\pi\}$. Then
\[\|T_{n}^{-1}(b_{f+\be_{n} g}(\cdot,s))\|\le\inf\{b_{f}(s)+\be_{n} b_{g}(s)\colon 0\le s\le\pi\}^{-1}.\]
If the constant $\be_{n}$ takes the value $h$, the last infimum is greater than $\inf\{b_{f}(s)\colon 0\le s\le\pi\}$, while for $\be_{n}=h^h$ it is greater than
\[\inf\{b_{f}(s)\colon 0\le s\le\pi\}+\inf\{b_{g}(s)\colon 0\le s\le\pi\}.\]
In both cases the provided bound is positive, finite, and does not depend on $n$.
\item For the two values of $\be_{n}$ considered, the function $b_{f+\be_{n} g}$ is continuous, bounded, and bounded away from zero uniformly in $n$. Thus, the boundness of the Wiener--Hopf factor $[b_{f+\be_{n} g}]_+^{-1}$ is a direct consequence of the fact that the singular integral operator is bounded over the space of continuous functions.
\end{itemize}

Lemma \ref{lm:55} is a plain and direct consequence, which for the reader convenience we present here, but without a proof.
\newpage
\begin{lemma}\label{lm:55}
For every sufficiently large natural number $n$ there exists a real-valued function $R_{2}^{(n)}\in C[0,\pi]$ with the following properties:
\begin{enumerate}
\item[\textup{(i)}] a number $\la=f(s)+\be_{n} g(s)$ is an eigenvalue of $T_{n}(f+\be_{n} g)$ if and only if there is a $j\in\bZ$ such that
\[\frac{s}{h}+\eta_{f+\be_{n} g}(s)-R_{2}^{(n)}(s)=j\pi;\]
\item[\textup{(ii)}] $R_{2}^{(n)}(0)=R_{2}^{(n)}(\pi)=0$;
\item[\textup{(iii)}] $\|R_{2}^{(n)}\|_{L_\nf}=o(h^{\al-1})$ as $n\to\nf$.
\end{enumerate}
\end{lemma}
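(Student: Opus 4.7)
The plan is to replicate the derivation of \cite[Lemma~5.5]{BoGr17} essentially verbatim, but with the Wiener--Hopf symbol $b_{f+\be_{n} g}=b_{f}+\be_{n} b_{g}$ in place of the single $b$, exploiting the uniform-in-$n$ bounds established just above the statement. In \cite{BoGr17} the eigenvalue equation is recast as a transcendental equation of the form $s/h+\eta_{b}(s)-R_{2}(s)=j\pi$, where $R_{2}$ collects the remainders coming from the finite-section approximation, from the Wiener--Hopf factorisation of $b$, and from the residual $R_{1}^{(n)}$ already controlled in the adaptation of \cite[Lemma~5.4]{BoGr17}. The discussion preceding Lemma~\ref{lm:55} shows that each of these ingredients extends to $b_{f+\be_{n} g}$, so the same manipulations apply.

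First I would observe that for $\be_{n}\in\{h,h^{h}\}$ the family $\{b_{f+\be_{n} g}\}_{n}$ is continuous, positive, and bounded away from zero uniformly in $n$, since $b_{f}$ and $b_{g}$ share these properties individually and $\be_{n}$ is positive and uniformly bounded. Combined with the two bullet points preceding the lemma, this delivers a uniform bound on $\|T_{n}^{-1}(b_{f+\be_{n} g}(\cdot,s))\|$ in $\ell_{1}$ as well as on $\|[b_{f+\be_{n} g}]_{+}^{-1}\|_{L_{\nf}}$. Feeding these uniform estimates into the algebraic construction of the reference produces a continuous function $R_{2}^{(n)}\colon[0,\pi]\to\bR$ for which part~(i) holds.

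Next I would verify part~(ii). This reduces to showing that certain Cauchy principal value integrals entering the definition of $R_{2}^{(n)}$ vanish at $s=0$ and $s=\pi$. The symmetry $f(2\pi-s)=f(s)$ and the analogous identity for $g$ force $b_{f+\be_{n} g}(\cdot,s)$ to be even, and hence the polynomial $\Tht_{k}(\cdot,s)$ appearing in the construction, and the integrands it generates, possess the same endpoint-vanishing structure as in the reference; the argument there carries over unchanged.

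The main obstacle, and the only step that requires genuine care, is the decay estimate~(iii). In \cite[Lemma~5.5]{BoGr17} the bound $\|R_{2}\|_{L_{\nf}}=o(h^{\al-1})$ rests on the regularity $b\in W^{\al-1}$ together with the operator bounds just recalled. Here one must check that the $n$-dependence entering through $\be_{n} b_{g}$ does not spoil this rate. Since $b_{g}\in W^{\al-1}$ and $\be_{n}\to 0$, the perturbation $\be_{n} b_{g}$ is $o(1)$ in $W^{\al-1}$, so for every sufficiently large $n$ the $W^{\al-1}$-norm of $b_{f+\be_{n} g}$ is majorised by a fixed multiple of the norm of $b_{f}$, and the Fourier-coefficient tails decay at least as fast. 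The reference estimates then apply term by term and yield $\|R_{2}^{(n)}\|_{L_{\nf}}=o(h^{\al-1})$, completing the proof.
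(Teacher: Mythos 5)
Your plan matches the paper's own: the paper explicitly presents Lemma~\ref{lm:55} ``without a proof,'' as a direct consequence of the preceding adaptations of~\cite[Lemmas~5.1--5.4]{BoGr17}, and your reconstruction of those adaptations (uniform-in-$n$ bounds on $\|T_{n}^{-1}(b_{f+\be_{n}g}(\cdot,s))\|$ in $\ell_{1}$ and on the Wiener--Hopf factor $[b_{f+\be_{n}g}]_{+}^{-1}$, followed by the transcendental rewriting of the eigenvalue equation) is exactly that route. There is, however, a concrete error in your treatment of part~(iii): you assert that $\be_{n}\to 0$, but for the second admissible choice $\be_{n}=h^{h}$ one has $h^{h}=\e^{h\log h}\to 1$ as $n\to\nf$, \emph{not} $0$. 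Consequently $\be_{n}b_{g}$ is not $o(1)$ in $W^{\al-1}$; in fact $b_{f+\be_{n}g}\to b_{f+g}$, which is precisely why Theorem~\ref{th:SLhh} expands around $f+g$ (rather than around $f$ alone), uses $\eta_{f+g}$ and $\ph$, and acquires the $\log^{k}(h)$ factors in its remainder. The conclusion you wanted---a uniform-in-$n$ control of $\|b_{f+\be_{n}g}\|_{W^{\al-1}}$---is nevertheless true, but the correct justification is simply that $0<\be_{n}<1$ for both $\be_{n}=h$ and $\be_{n}=h^{h}$, so that $\|b_{f}+\be_{n}b_{g}\|_{W^{\al-1}}\le\|b_{f}\|_{W^{\al-1}}+\|b_{g}\|_{W^{\al-1}}$ uniformly in $n$; together with the uniform lower bound on $\inf_{s}b_{f+\be_{n}g}$ already established in the paper's bullet points, this is what keeps the constants in \cite[Lemma~5.5]{BoGr17} $n$-independent and yields $\|R_{2}^{(n)}\|_{L_{\nf}}=o(h^{\al-1})$. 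With that repair, the rest of your argument, including the symmetry argument for part~(ii), goes through as you describe.
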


\begin{proof}[Proof of Theorem \ref{th:SLh}]
Assume that $2\le\al<3$ and let $\be_{n}=h$ and for simplicity write $d$ instead of $d_{j,n}$. Combining Lemmas \ref{lm:eta} and \ref{lm:55} part (iii), we obtain
\begin{equation}\label{eq:etahMain}
G_{n}(s)=j\pi,
\end{equation}
where $G_{n}(s)\coloneqq \frac{s}{h}+\eta_{f}(s)+\psi(s)h-R_{2}^{(n)}(s)$. The function $G_{n}$ is continuous on the interval $[0,\pi]$ with $G_{n}(0)=0$ and $G_{n}(\pi)=\frac{\pi}{h}$, hence by \cite[Theorem 2.1]{BoGr17} we know that
\begin{enumerate}
\item[\textup{(i)}] the eigenvalues of $T_{n}(f+gh)$ are all distinct:
\[\la_{1}(T_{n}(f+gh))<\cdots<\la_{n}(T_{n}(f+gh));\]
\item[\textup{(ii)}] the numbers $s_{j,n}\coloneqq \big[f+gh\big]_{[0,\pi]}^{-1}(\la_{j}(T_{n}(f+gh))$ $(j=1,\ldots,n)$ satisfy the relation \eqref{eq:etahMain} with $R_{2}^{(n)}(s_{j,n})=o(h^{\al-1})$ as $n\to\nf$, uniformly in $j$;
\item[\textup{(iii)}] for every sufficiently large $n$, \eqref{eq:etahMain} has exactly one solution $s_{j,n}\in[0,\pi]$ for each $j=1,\ldots,n$.
\end{enumerate}

Let $F_{n}(s)\coloneqq \frac{s}{h}+\eta_{f}(s)+\psi(s)h$. Hence, Lemmas 6.1 and 6.2 in \cite{BoGr17} can be replicated asserting that $F_{n}(s)=\pi j$ has a unique solution $\hat s_{j,n}$ for each $j=1,\ldots,n$ satisfying the bounding $|s_{j,n}-\hat s_{j,n}|=o(h^{\al})$, and that the function
\[\Phi_{j,n}(s)\coloneqq d-\eta_{f}(s)h-\psi(s)h^{2},\]
is a contraction on $[0,\pi]$. Then by the Banach fixed point theorem, the sequence defined by
\[\hat s_{j,n}^{(0)}\coloneqq d\quad\mbox{and}\quad\hat s_{j,n}^{(\ell)}\coloneqq\Phi_{j,n}(\hat s_{j,n}^{(\ell-1)})\quad(\ell\ge1),\]
satisfies $|\hat s_{j,n}-\hat s_{j,n}^{(\ell)}|=O(h^{\ell+1})$.

Write \eqref{eq:etahMain} as $s=\Phi_{j,n}(s)+\De_{n,\al}(s)$ where $\De_{n,\al}(s)=o(h^{\al})+O(h^{3})$ as $n$ grows to infinity. We will iterate over the relation $s=\Phi_{j,n}(s)$. We have $\hat s_{j,n}^{(0)}=d$ and $\hat s_{j,n}^{(1)}=\Phi_{j,n}(\hat s_{j,n}^{(0)})=d-\eta_{f}(d)h-\psi(d)h^{2}$. To evaluate $\hat s_{j,n}^{(2)}$ first note that

\begin{eqnarray*}
\eta_{f}(\hat s_{j,n}^{(1)})&=&\eta_{f}(d)-\eta_{f}'(d)\{\eta_{f}(d)h+\psi(d)h^{2}\}\\
&&+\frac{1}{2}\eta_{f}''(d)\{\eta_{f}^{2}(d)+\psi^{2}(d)h^{2}+2\eta_{f}(d)\psi(d)h\}h^{2}+O(h^{3})\\
&=&\eta_{f}(d)-\eta_{f}(d)\eta_{f}'(d)h+\big\{\frac{1}{2}\eta_{f}^{2}(d)\eta_{f}''(d)-\eta_{f}'(d)\psi(d)\big\}h^{2}+O(h^{3})
\end{eqnarray*}
and that
\begin{eqnarray*}
\psi(\hat s_{j,n}^{(1)})&=&\psi(d)-\psi'(d)\{\eta_{f}(d)h+\psi(d)h^{2}\}\\
&&+\frac{1}{2}\psi''(d)\{\eta_{f}^{2}(d)+\psi^{2}(d)h^{2}+2\eta_{f}(d)\psi(d)h\}h^{2}+O(h^{3})\\
&=&\psi(d)-\psi'(d)\eta_{f}(d)h+O(h^{2}).
\end{eqnarray*}
Now by the Taylor theorem we obtain
\begin{eqnarray*}
\hat s_{j,n}^{(2)}&=&d-\eta_{f}(d-\eta_{f}(d)h-\psi(d)h^{2})h-\psi(d-\eta_{f}(d)h-\psi(d)h)h^{2}\\
&=&d-\eta_{f}(d)h+\{\eta_{f}(d)\eta_{f}'(d)-\psi(d)\}h^{2}+O(h^{3}),
\end{eqnarray*}
which combined with $|s_{j,n}-\hat s_{j,n}^{(2)}|=o(h^{\al})+O(h^{3})$ leads to prove the theorem for the case $2\le\al<3$. The remaining cases can be readily proved, essentially in the same manner.
\end{proof}

\begin{proof}[Proof of Theorem \ref{th:SLhh}]
This proof is quite similar to the previous one, thus to avoid unnecessary repetition, we will only remark the main differences. Taking $\be_{n}=h^h$ we obtain
\begin{eqnarray*}
G_{n}(s)&\coloneqq&\frac{s}{h}+\eta_{f+g}(s)+\ph(s)h\log(h),\\
F_{n}(s)&\coloneqq&\frac{s}{h}+\eta_{f+g}(s)+\ph(s)h\log(h),\\
\Ph_{j,n}(s)&\coloneqq&d-\eta_{f+g}(s)h-\ph(s)h^{2}\log(h).
\end{eqnarray*}
The remaining function $\De_{n,\al}$ now have order $o(h^{\al})+O(h^{3}\log^{2}(h))$ as $n\to\nf$, and $\hat s_{j,n}^{(0)}=d$ and $\hat s_{j,n}^{(1)}=\Phi_{j,n}(\hat s_{j,n}^{(0)})=d-\eta_{f+g}(d)h-\ph(d)h^{2}\log(h)$. To evaluate $\hat s_{j,n}^{(2)}$ first note that
\begin{eqnarray*}
\eta_{f+g}(\hat s_{j,n}^{(1)})&=&\eta_{f+g}(d)-\eta_{f+g}'(d)\{\eta_{f+g}(d)h+\ph(d)h^{2}\log(h)\}\\
&&+\frac{1}{2}\eta_{f+g}''(d)\{\eta_{f+g}^{2}(d)+\ph^{2}(d)h^{2}\log^{2}(h)
\\&&+2\eta_{f+g}(d)\ph(d)h\log(h)\}h^{2}+O(h^{3}|\log^{3}(h)|)\\
&=&\eta_{f+g}(d)-\eta_{f+g}(d)\eta_{f+g}'(d)h\\
&&+\big\{\frac{1}{2}\eta_{f+g}^{2}(d)\eta_{f+g}''(d)-\eta_{f+g}'(d)\ph(d)\log(h)\big\}h^{2}+O(h^{3}|\log^{3}(h)|).
\end{eqnarray*}
and that
\begin{eqnarray*}
\ph(\hat s_{j,n}^{(1)})&=&\ph(d)-\ph'(d)\{\eta_{f+g}(d)h+\ph(d)h^{2}\log(h)\}\\
&&+\frac{1}{2}\ph''(d)\{\eta_{f+g}^{2}(d)+\ph^{2}(d)h^{2}\log^{2}(h)+2\eta_{f+g}(d)\ph(d)h\log(h)\}h^{2}+O(h^{3}|\log^{3}(h)|)\\
&=&\ph(d)-\ph'(d)\eta_{f+g}(d)h+O(h^{2}|\log(h)|).
\end{eqnarray*}
Now by the Taylor theorem we obtain
\[\hat s_{j,n}^{(2)}=d-\eta_{f+g}(d)h+\{\eta_{f+g}(d)\eta_{f+g}'(d)-\ph(d)\log(h)\}h^{2}+O(h^{3}|\log^{3}(h)|),\]
which combined with $|s_{j,n}-\hat s_{j,n}^{(2)}|=o(h^{\al})+O(h^{3}|\log^{3}(h)|)$ give us the theorem for the case $2\le\al<3$. The remaining cases can be readily proved.
\end{proof}

\section{Numerical tests and an algorithm proposal}\label{sec:Num}

The purpose of our main results is to provide asymptotic expansions revealing the fine structure of the eigenvalue behavior in a matrix-less fashion, that is, without the need of any matrix inversion, multiplication by vectors, or even the storage of its entries. Such an expansion has proved to be useful in several applications. In \cite{EkGa19,EkGa18} the authors exploited those expansions and provided a clever and fully numerical algorithm, that calculates the eigenvalues of arbitrarily large matrices within machine precision. More specifically they devised an extrapolation algorithm for computing the eigenvalues of banded symmetric Toeplitz matrices with a high level of accuracy and low computation cost, starting from the computation of the eigenvalues of small size matrices, in the same spirit of the classical extrapolation procedures for the summation of smooth functions.

As in previous works, the purpose of this section is to check if we calculate the involved coefficients in all of our results correctly, and we want to prove that our asymptotic expressions deliver good approximations even for values of $n$ in the early hundreds. The numerical calculation of a singular integral can be very difficult, we therefore use a standard regularization trick (see~\cite[\S7]{BoBo15a}) which works as follows. For an integral of the form
\[\cI(v)=\dint_{\ga}^{\de}\frac{f(u,v)}{h(u,v)}\d u,\]
where for some $u_0\in[\ga,\de]$, we have $h(u_0,v)=0$ and $f(u_0,v)\ne0$, we can write
\[\cI(v)=\dint_{\ga}^{\de}\frac{f(u,v)-f(u_0,v)}{h(u,v)}\d u+f(u_0,v)\dint_{\ga}^{\de}\frac{\d u}{h(u,v)},\]
and in most cases, the above integrals are easier to tackle than $\cI$ itself. For the case of $\eta$, the value of $\dint_0^{2\pi}\cot\big(\frac{\si-s}{2}\big)\d \si=0$, and hence we obtain
\[\eta(s)=\frac{\sin(s)}{2\pi}\dint_0^{2\pi}\frac{\log b(\si,s)-\log b(s,s)}{\cos(s)-\cos(\si)}\d \si,\]
where $b(s,s)$ can be calculated with the L'Hôpital rule as
\[b(s,s)=\frac{f'(s)}{2\sin(s)},\quad b(0,0)=\frac{f''(0)}{2},
\quad\mbox{and}\quad b(\pi,\pi)=-\frac{f''(0)}{2}.\]

Consider the simple-loop symbol given by
\[f(\si)\coloneqq \frac{(1+\rho)^{2}}{2}\cdot\frac{1-\cos(\si)}{1-2\rho\cos(\si)+\rho^{2}}\qquad(0\le\si\le2\pi),\]
for a constant $0<\rho<1$. The respective Fourier coefficients can be explicitly calculated as $\hat f_k=\frac{1}{4}(\rho^{2}-1)\rho^{|k|-1}$ for $k\ne0$ and $\frac{1}{2}(1+\rho)$ for $k=0$. This symbol was inspired in the Kac--Murdock--Szeg\H{o} Toeplitz matrices introduced in \cite{KaMu53} and subsequently studied in \cite{Tr99,Tr10}, which usually are present in important physics models. We have
\[\|f\|_{\al}=\frac{1+\rho}{2}+\frac{\rho^{2}-1}{2\rho}\sum_{k=1}^{\nf}\rho^k(k+1)^{\al},\]
which is finite for every $\al>0$, the remaining simple-loop conditions are easily verified in Figure~\ref{fg:f0}. Then $f\in\SL^{\al}$ for any $\al>0$. In this case it is possible to deduce elementary expressions for the factors of the Wiener--Hopf factorization, indeed
\[[b_{f}]_+(t,s)=\frac{(1-\rho^{2})^{2}}{4(1-\rho t)(1-2\rho\cos(s)+\rho^{2})},\qquad
[b_{f}]_-(t,s)=\frac{1}{1-\rho t^{-1}}.\]
The function $\eta_{f}$ from \eqref{eq:eta} is therefore nicely given by
\[\eta_{f}(s)=2\arctan\Big(\frac{\rho\sin(s)}{1-\rho\cos(s)}\Big).\]

\begin{figure}[ht]
\centering
\includegraphics[width=60mm]{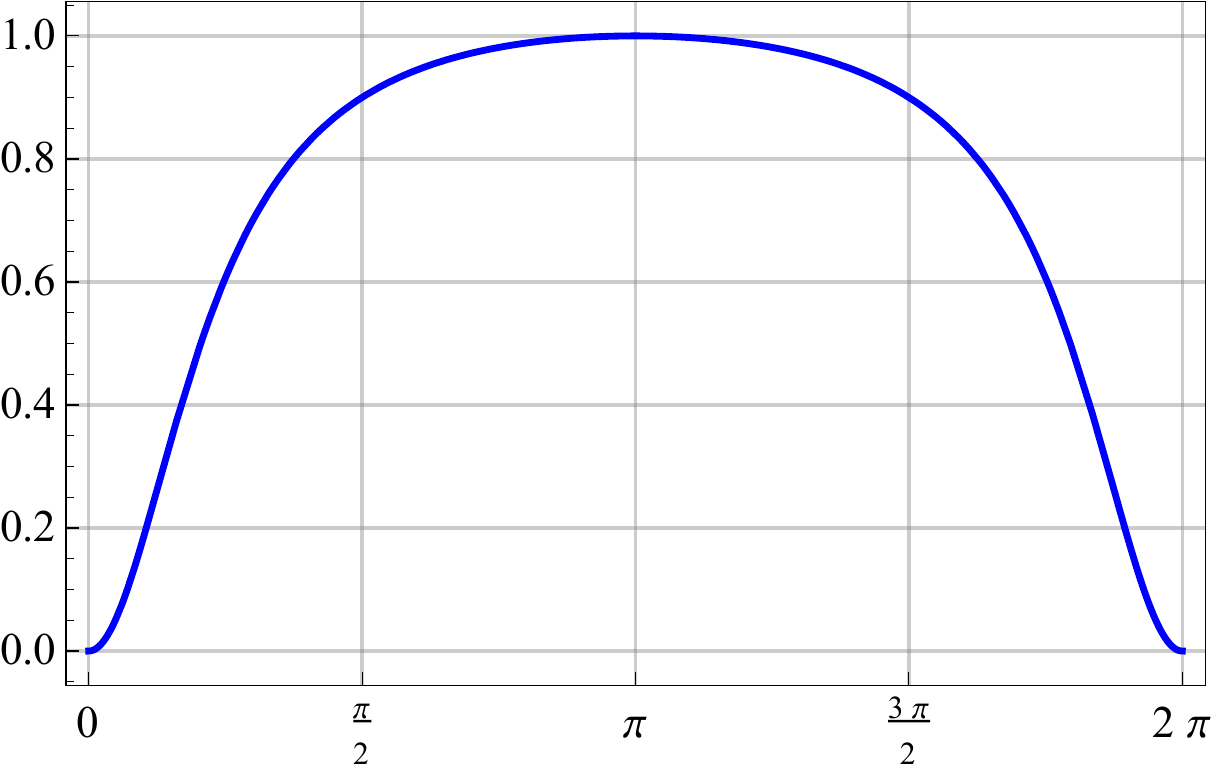}
\put(-84,113){\small $f$}
\hspace{2mm}
\includegraphics[width=60mm]{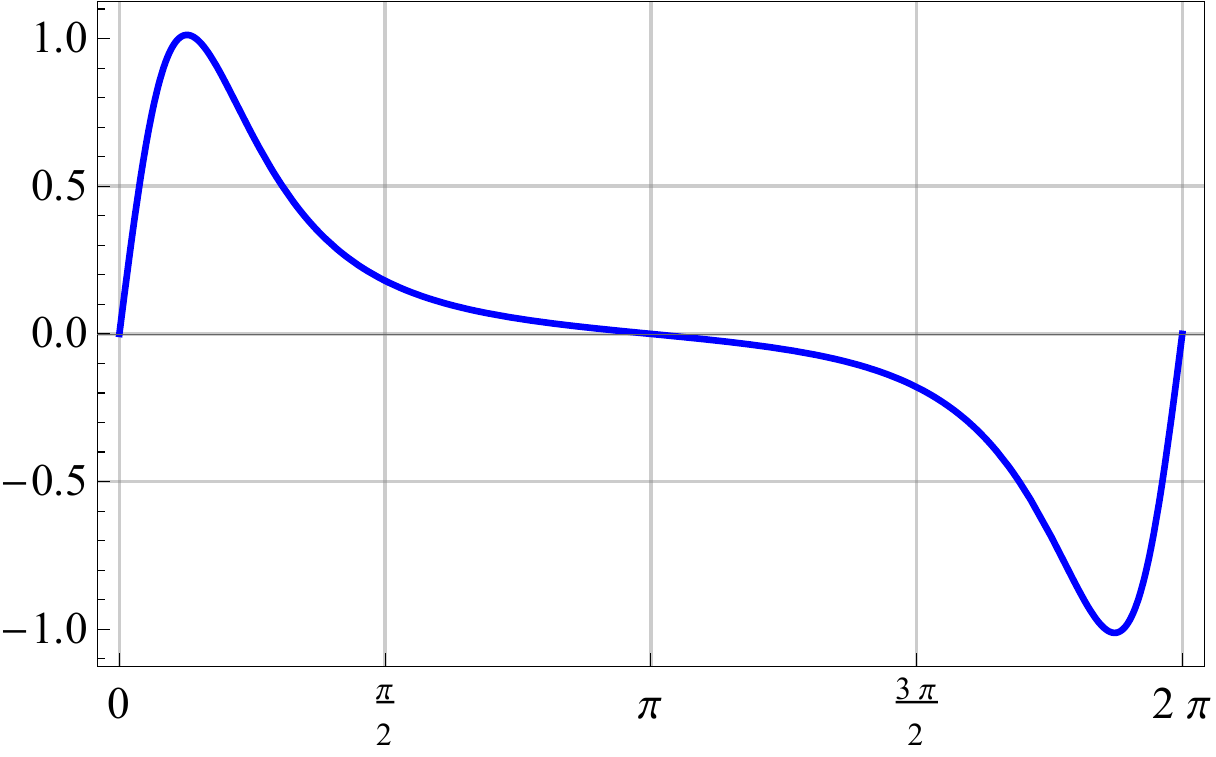}
\put(-84,113){\small $f'$}\\
\includegraphics[width=60mm]{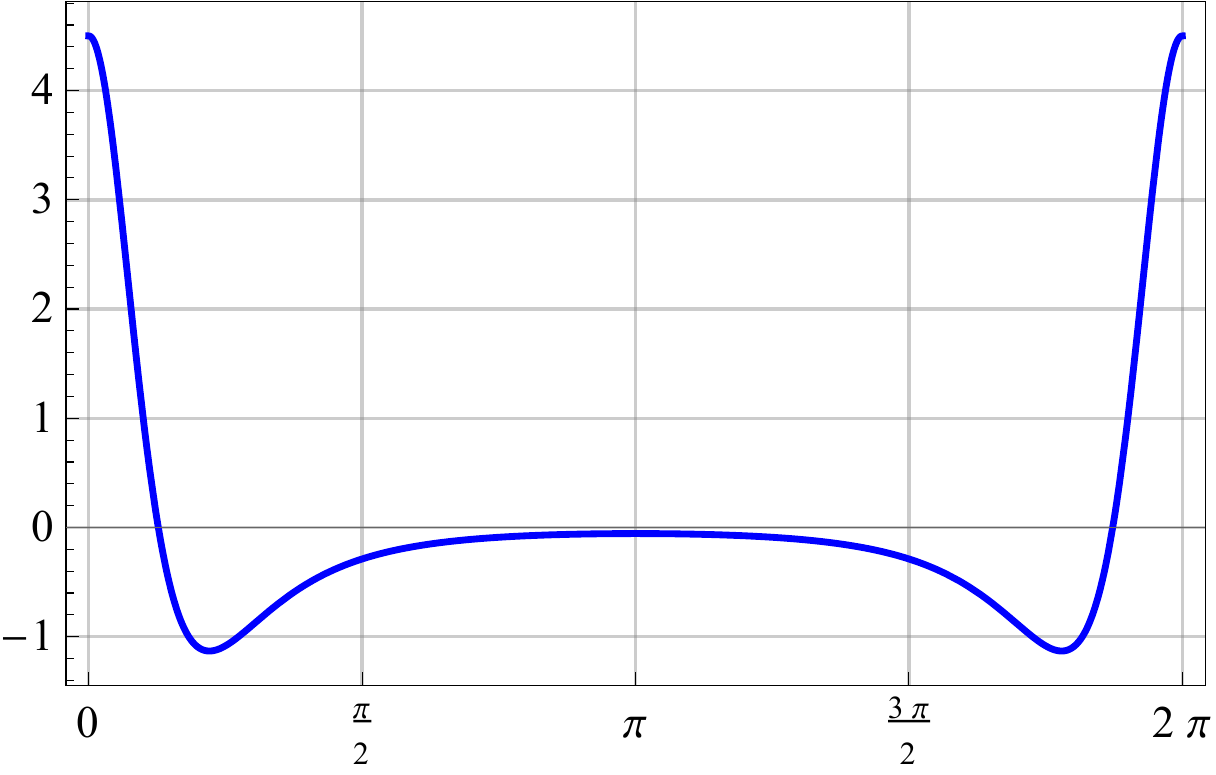}
\put(-84,113){\small $f''$}
\caption{The symbol $f$ and its first two derivatives for $\rho=\frac{1}{2}$.}\label{fg:f0}
\end{figure}

Our second symbol is given by
\[g(\si)\coloneqq 4\sin^{2}\Big(\frac{\si}{2}\Big).\]
The respective Fourier coefficients can be calculated as $\hat g_k=-1$ for $k=\pm1$, $\hat g_k=2$ for $k=0$, and $\hat g_k=0$ in any other case: we remind that the quoted symbol is related to the classical discrete Laplacian in one dimension. Hence $\|g\|_{\al}<\nf$ for any $\al>0$ and the remaining simple-loop conditions are easily verified in Figure~\ref{fg:f1}. Then $g\in\SL^{\al}$ for any $\al>0$. In this case we obtain $b_{g}(\si,s)=1$ and $\eta_{g}(s)=0$, and the eigenvalues of $T_{n}(g)$ can be obtained explicitly as $\la_{j}(T_{n}(g))=g(d_{j,n})$.

\begin{figure}[ht]
\centering
\includegraphics[width=60mm]{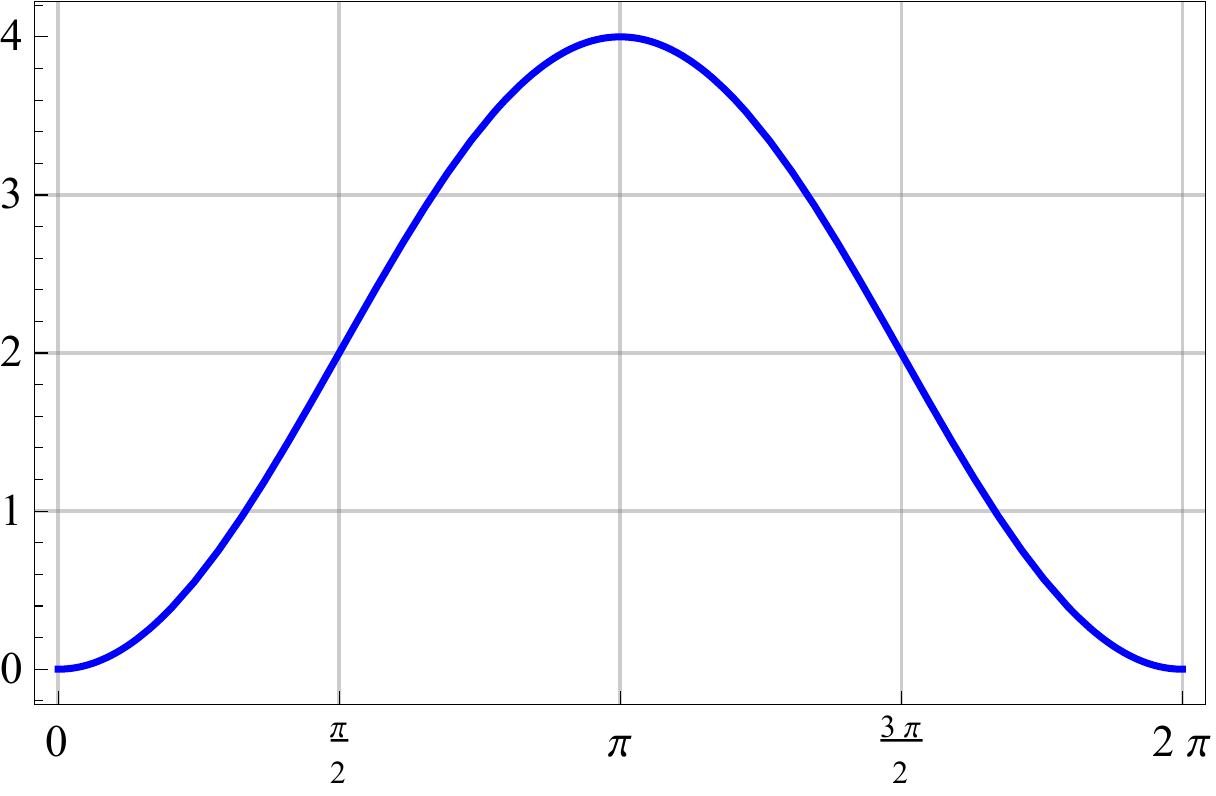}
\put(-84,115){\small $g$}
\hspace{2mm}
\includegraphics[width=60mm]{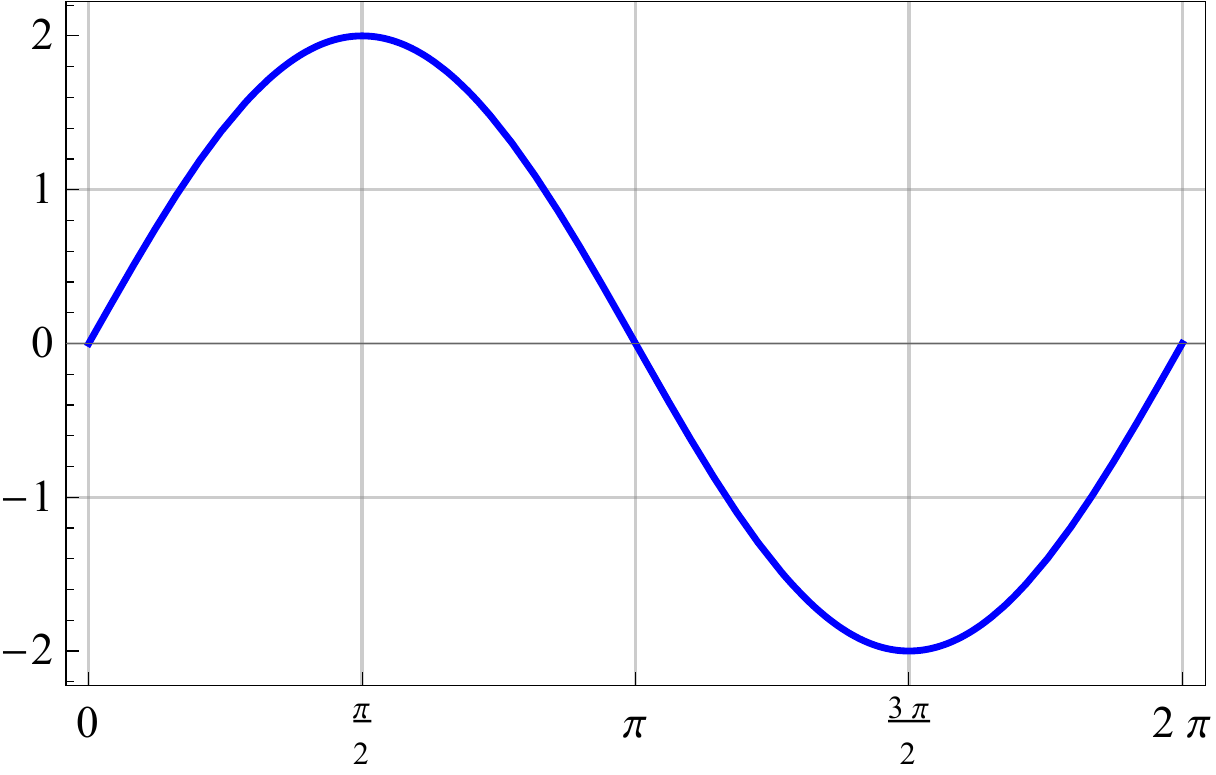}
\put(-84,113){\small $g'$}\\
\includegraphics[width=60mm]{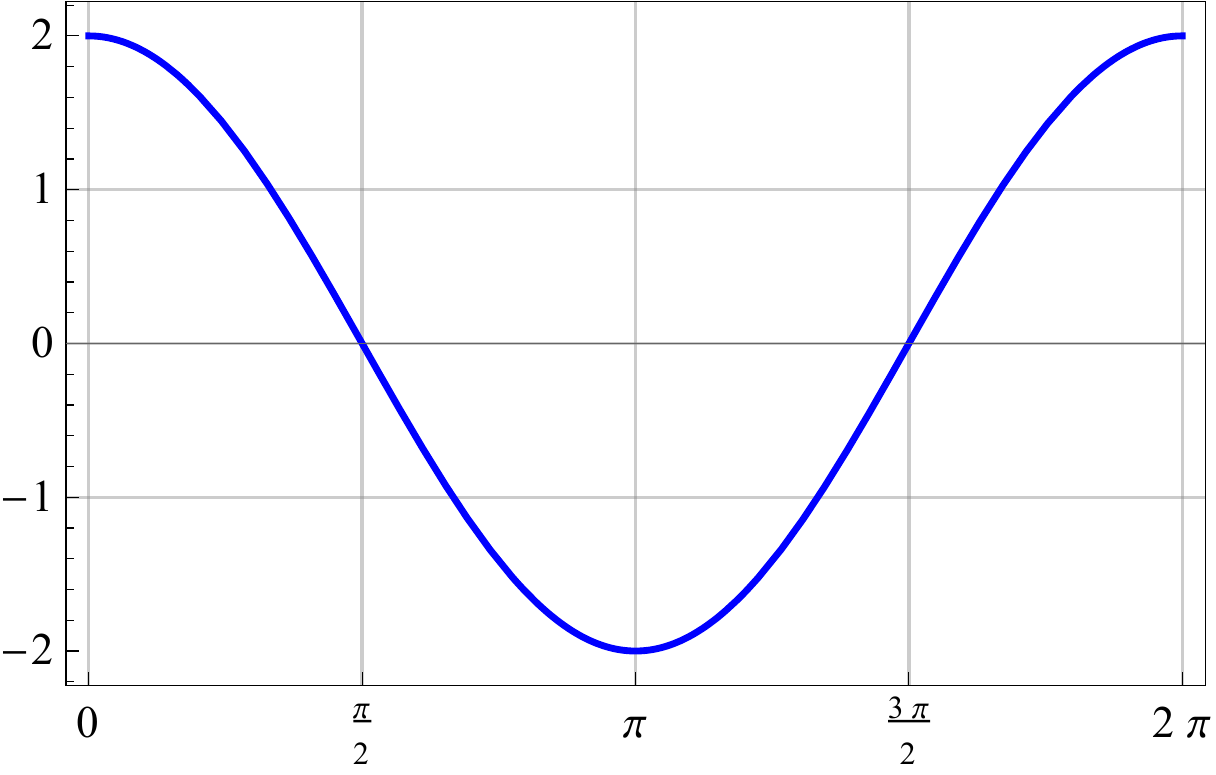}
\put(-84,113){\small $g''$}
\caption{The symbol $g$ and its first two derivatives.}\label{fg:f1}
\end{figure}

We start the numerical verification with the result in Lemma~\ref{lm:addition}. Since $W^{\al}$ is an algebra, the symbol $f+g$ clearly belongs to $\SL^{\al}$ for any $\al>0$. For $k=1,2,3$ let $\la_{j}^{(k)}(T_{n}(f))$ be the $k$th term approximation of $\la_{j}(T_{n}(f))$ given by our formulas. Specifically we find
\begin{eqnarray*}
\la_{j}^{(1)}(T_{n}(f+g))&=&\la_{j}(T_{n}(f))+\la_{j}(T_{n}(g)),\\
\la_{j}^{(2)}(T_{n}(f+g))&=&\la_{j}(T_{n}(f))+\la_{j}(T_{n}(g))+hQ_{1}(d_{j,n}),\\
\la_{j}^{(3)}(T_{n}(f+g))&=&\la_{j}(T_{n}(f))+\la_{j}(T_{n}(g))+hQ_{1}(d_{j,n})+h^{2}Q_{2}(d_{j,n}).
\end{eqnarray*}
Consider the error terms $\eps_{j,n}^{(k)}\coloneqq\la_{j}(T_{n}(f+g))-\la_{j}^{(k)}(T_{n}(f+g))$, and the corresponding maximal absolute error
\[\eps_{n}^{(k)}\coloneqq\max\{|\eps_{j,n}^{(k)}|\colon j=1,\ldots,n\}.\]
According to Lemma~\ref{lm:addition}, we must have $\eps_{j,n}^{(k)}=O(h^k)$ for $k=1,2,3$ uniformly in $j$, more specifically, the normalized errors $(n+1)^k\eps_{j,n}^{(k)}$ for $k=1,2$ must be ``close'' to the terms $Q_{1}$ and $Q_{2}$, respectively. Figure~\ref{fg:Q12} and Table~\ref{tb:Q12} show the data.

\begin{figure}[ht]
\centering
\includegraphics[width=75mm]{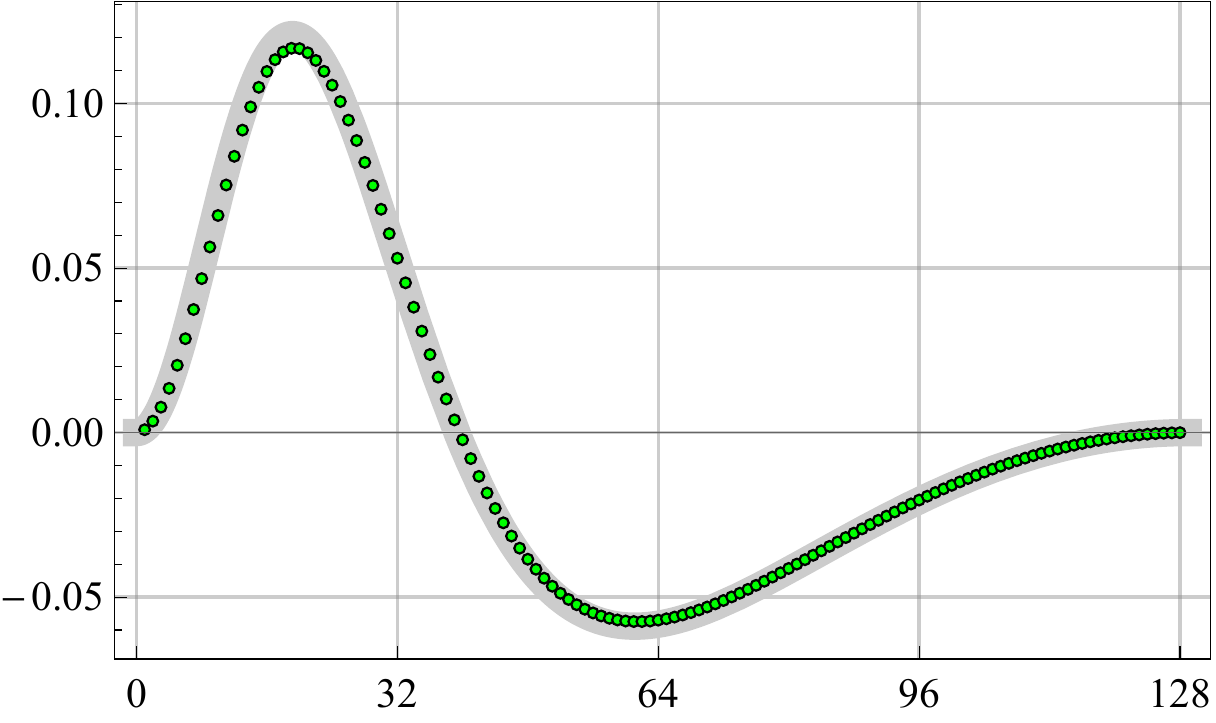}
\hspace{2mm}
\includegraphics[width=75mm]{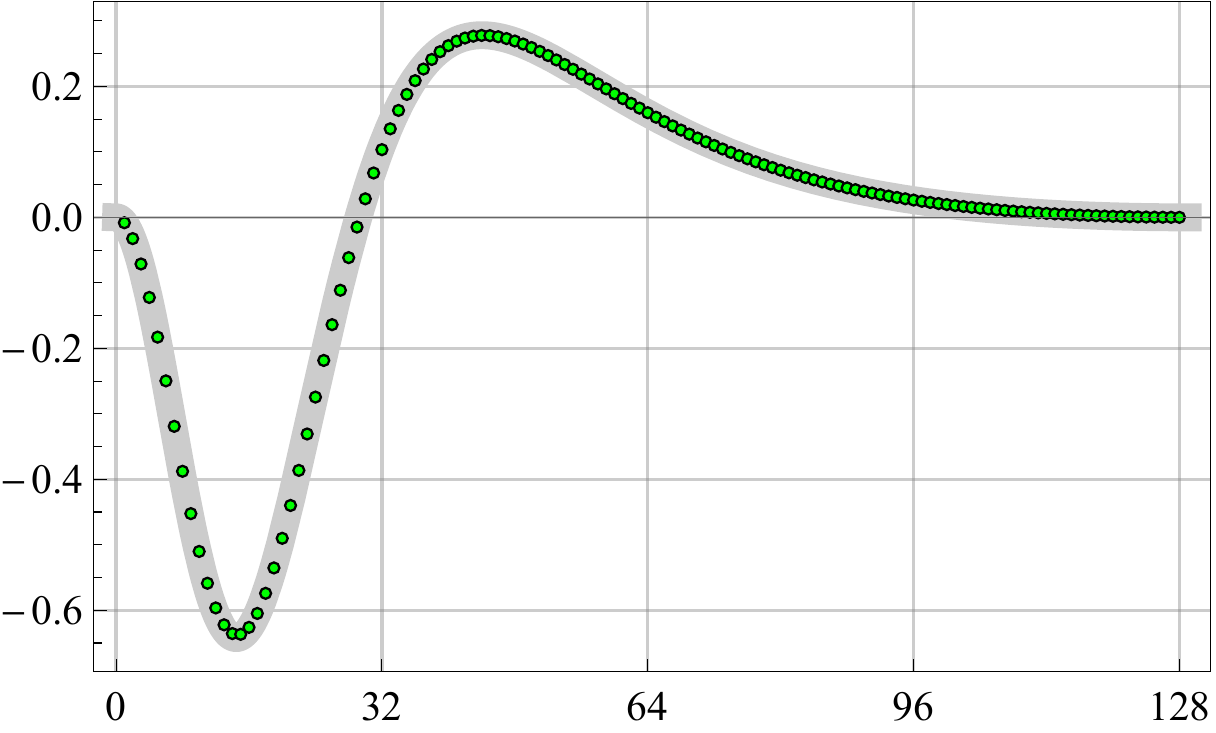}
\caption{Asymptotic expansion for the eigenvalues of $T_{n}(f+g)$. The light gray curve is the term $Q_k$ in Lemma~\ref{lm:addition}, and the green dots are the normalized errors $(n+1)^k\eps_{j,n}^{(k)}$ for $j=1,\ldots,n$ and $n=128$. The left graphic corresponds to the first term $k=1$ and the right one to the second $k=2$.}\label{fg:Q12}
\end{figure}

\begin{table}[ht]
\centering
{\footnotesize\begin{tabular}{|l|l|l|l|l|l|l|}
\hline
\multicolumn{1}{|c|}{$n$} & \multicolumn{1}{|c|}{$256$} & \multicolumn{1}{|c|}{$512$} & \multicolumn{1}{|c|}{$1024$} & \multicolumn{1}{|c|}{$2048$} & \multicolumn{1}{|c|}{$4096$} & \multicolumn{1}{|c|}{$8192$} \\ \hline\hline
$\eps_{n}^{(1)}$ & $4.6270\x10^{\text{-}4}$ & $2.3382\x10^{\text{-}4}$ & $1.1753\x10^{\text{-}4}$ & $5.8918\x10^{\text{-}5}$ & $2.9498\x10^{\text{-}5}$ & $1.4759\x10^{\text{-}5}$\\ \hline
$\hat\eps_{n}^{(1)}$ & $1.1891\x10^{\text{-}1}$& $1.1995\x10^{\text{-}1}$ & $1.2047\x10^{\text{-}1}$ & $1.2072\x10^{\text{-}1}$ & $1.2085\x10^{\text{-}1}$ & $1.2092\x10^{\text{-}1}$\\ \hline
$\eps_{n}^{(2)}$ & $9.6908\x10^{\text{-}6}$ & $2.4365\x10^{\text{-}6}$ & $6.1092\x10^{\text{-}7}$ & $1.5295\x10^{\text{-}7}$ & $3.8265\x10^{\text{-}8}$ & $9.5697\x10^{\text{-}9}$\\ \hline
$\hat\eps_{n}^{(2)}$ & $6.4007\x10^{\text{-}1}$ & $6.4120\x10^{\text{-}1}$ & $6.4184\x10^{\text{-}1}$ & $6.4214\x10^{\text{-}1}$ & $6.4229\x10^{\text{-}1}$ & $6.4237\x10^{\text{-}1}$\\ \hline
$\eps_{n}^{(3)}$ & $6.8729\x10^{\text{-}8}$ & $8.6728\x10^{\text{-}9}$ & $1.0883\x10^{\text{-}9}$ & $1.3634\x10^{\text{-}10}$ & $1.7037\x10^{\text{-}11}$ & $ 2.1231\x10^{\text{-}12}$\\ \hline
$\hat\eps_{n}^{(3)}$ & $1.1666\x10^{0}$ & $1.1709\x10^{0}$ & $1.1720\x10^{0}$ & $1.1722\x10^{0}$ & $1.1716\x10^{0}$ & $1.1676\x10^{0}$\\ \hline
\end{tabular}}
\vspace{2mm}
\caption{The maximum errors $\eps_{n}^{(k)}$ and maximum normalized errors $\hat\eps_{n}^{(k)}\coloneqq (n+1)^k\eps_{n}^{(k)}$ for $k=1,2,3$ and different values of $n$, corresponding to the asymptotic expansion for the eigenvalues of $T_{n}(f+g)$ in Lemma \ref{lm:addition}.}\label{tb:Q12}
\end{table}

For the numerical verification of Theorem~\ref{th:SLh}, we need to calculate the singular integral in $\psi$, which for this example can be simplified to
\[\psi(s)=\frac{\sin(s)}{\pi}\dint_0^{2\pi}\frac{1}{f(\si)-f(s)}\d \si.\]
For the $k$th term approximation we get this time
\begin{eqnarray*}
\la_{j}^{(1)}&=&f(d_{j,n}),\\
\la_{j}^{(2)}&=&f(d_{j,n})+\Psi_{1}(d_{j,n})h,\\
\la_{j}^{(3)}&=&f(d_{j,n})+\Psi_{1}(d_{j,n})h+\Psi_{2}(d_{j,n})h^{2}.
\end{eqnarray*}
According to the results in Theorem~\ref{th:SLh}, we must have $\eps_{j,n}^{(k)}=O(h^k)$ uniformly in $j=1,\ldots,n$ for $k=1,2,3$, more specifically the normalized errors $(n+1)^k\eps_{j,n}^{(k)}$ for $k=1,2$ are expected to be ``close'' to $\Psi_{1}$ and $\Psi_{2}$, respectively. The Figure~\ref{fg:Psi12} and Table~\ref{tb:Psi12} show the data.

\begin{figure}[ht]
\centering
\includegraphics[width=71mm]{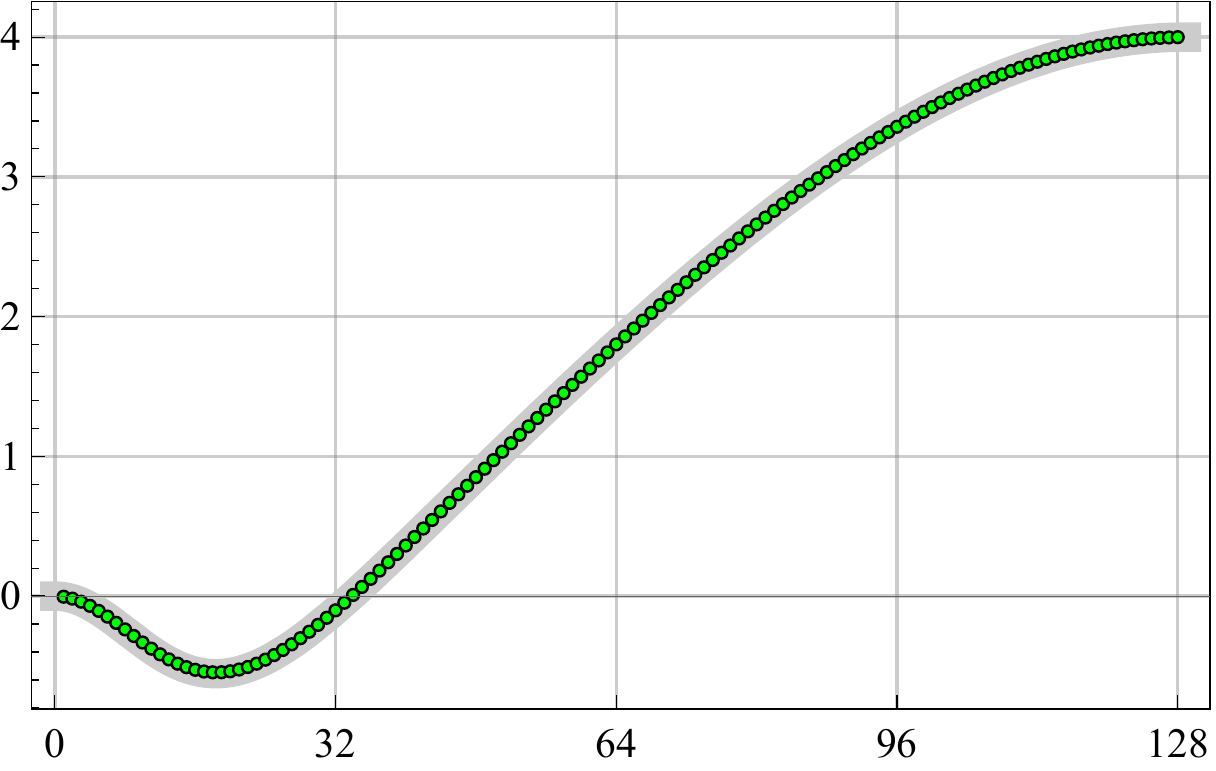}
\hspace{2mm}
\includegraphics[width=75mm]{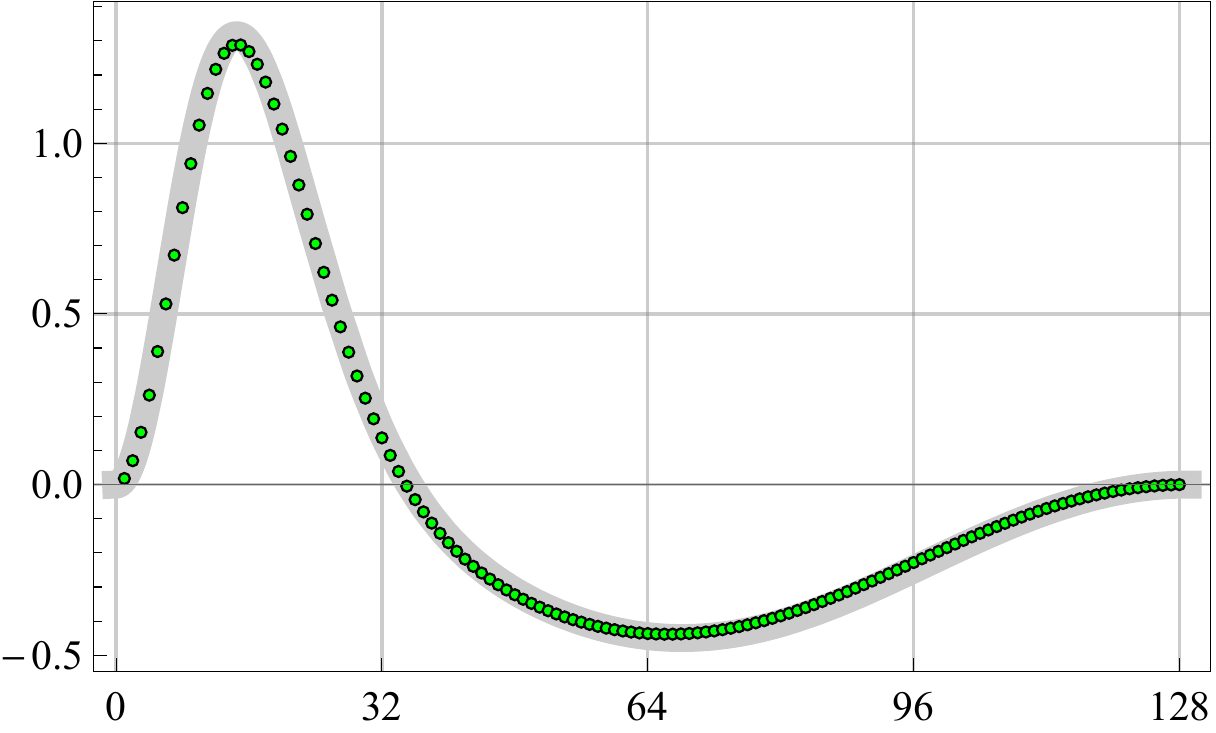}
\caption{Asymptotic expansion for the eigenvalues of $T_{n}(f+gh)$. The light gray curve is the term $\Psi_k$ in Theorem~\ref{th:SLh}, and the green dots are the normalized errors $(n+1)^k\eps_{j,n}^{(k)}$ for $j=1,\ldots,n$ and $n=128$. The left graphic corresponds to the first term $k=1$ and the right one to the second $k=2$.}\label{fg:Psi12}
\end{figure}

\begin{table}[ht]
\centering
{\footnotesize\begin{tabular}{|l|l|l|l|l|l|l|}
\hline
\multicolumn{1}{|c|}{$n$} & \multicolumn{1}{|c|}{$256$} & \multicolumn{1}{|c|}{$512$} & \multicolumn{1}{|c|}{$1024$} & \multicolumn{1}{|c|}{$2048$} & \multicolumn{1}{|c|}{$4096$} & \multicolumn{1}{|c|}{$8192$} \\ \hline\hline
$\eps_{n}^{(1)}$ & $1.5564\x10^{\text{-}2}$ & $7.7972\x10^{\text{-}3}$ & $3.9024\x10^{\text{-}3}$ & $1.9522\x10^{\text{-}3}$ & $9.7632\x10^{\text{-}4}$ & $4.8822\x10^{\text{-}4}$\\ \hline
$\hat\eps_{n}^{(1)}$ & $3.9998\x10^{0}$& $4.0000\x10^{0}$ & $4.0000\x10^{0}$ & $4.0000\x10^{0}$ & $4.0000\x10^{0}$ & $4.0000\x10^{0}$\\ \hline
$\eps_{n}^{(2)}$ & $1.9728\x10^{\text{-}5}$ & $4.9765\x10^{\text{-}6}$ & $1.2498\x10^{\text{-}6}$ & $3.1315\x10^{\text{-}7}$ & $7.8377\x10^{\text{-}8}$ & $1.9605\x10^{\text{-}8}$\\ \hline
$\hat\eps_{n}^{(2)}$ & $1.3030\x10^{0}$ & $1.3096\x10^{0}$ & $1.3130\x10^{0}$ & $1.3147\x10^{0}$ & $1.3156\x10^{0}$ & $1.3160\x10^{0}$\\ \hline
$\eps_{n}^{(3)}$ & $2.0796\x10^{\text{-}7}$ & $2.6386\x10^{\text{-}8}$ & $3.3384\x10^{\text{-}9}$ & $4.2142\x10^{\text{-}10}$ & $5.3166\x10^{\text{-}11}$ & $ 6.7005\x10^{\text{-}12}$\\ \hline
$\hat\eps_{n}^{(3)}$ & $3.5301\x10^{0}$ & $3.5623\x10^{0}$ & $3.5950\x10^{0}$ & $3.6253\x10^{0}$ & $3.6562\x10^{0}$ & $3.6850\x10^{0}$\\ \hline
\end{tabular}}
\vspace{2mm}
\caption{The maximum errors $\eps_{n}^{(k)}$ and maximum normalized errors $\hat\eps_{n}^{(k)}\coloneqq (n+1)^k\eps_{n}^{(k)}$ for $k=1,2,3$ and different values of $n$, corresponding to the asymptotic expansion for the eigenvalues of $T_{n}(f+gh)$ in Theorem \ref{th:SLh}.}\label{tb:Psi12}
\end{table}

For the numerical verification of Theorem~\ref{th:SLhh}, we need to calculate the singular integral in $\ph$, which for this example can be simplified to
\[\ph(s)=\frac{\sin(s)}{\pi}\dint_0^{2\pi}\frac{1}{f(\si)-f(s)+2\cos(s)-2\cos(\si)}\d \si.\]
Taking into account that the logarithm is relatively small for the matrix sizes considered, we arrange the $k$th term approximation in a different way this time
\begin{eqnarray*}
\la_{j}^{(1)}&=&f(d_{j,n})+g(d_{j,n}),\\
\la_{j}^{(2)}&=&f(d_{j,n})+g(d_{j,n})+\Ga_{1,1}(d_{j,n})h\log(h)+\Ga_{1,0}(d_{j,n})h,\\
\la_{j}^{(3)}&=&f(d_{j,n})+g(d_{j,n})+\Ga_{1,1}(d_{j,n})h\log(h)+\Ga_{1,0}(d_{j,n})h\\
&&+\Ga_{2,2}(d_{j,n})h^{2}\log^{2}(h)+\Ga_{2,1}(d_{j,n})h^{2}\log(h)+\Ga_{2,0}(d_{j,n})h^{2}.
\end{eqnarray*}
According to the results in Theorem~\ref{th:SLh}, we have the bound $\eps_{j,n}^{(k)}=O(h^k|\log^k(h)|)$ uniformly in $j=1,\ldots,n$ for $k=1,2,3$, more specifically the normalized errors $\frac{(n+1)^k}{\log^k(n+1)}\eps_{j,n}^{(k)}$ for $k=1,2$ are expected to be ``close'' to the functions $\Ga_{1,1}+\Ga_{1,0}\frac{1}{\log(h)}$ and $\Ga_{2,2}+\Ga_{2,1}\frac{1}{\log(h)}+\Ga_{2,0}\frac{1}{\log^{2}(h)}$, respectively. The Figure~\ref{fg:Ga12} and Table~\ref{tb:Ga12} show the data.

\begin{figure}[ht]
\centering
\includegraphics[width=73mm]{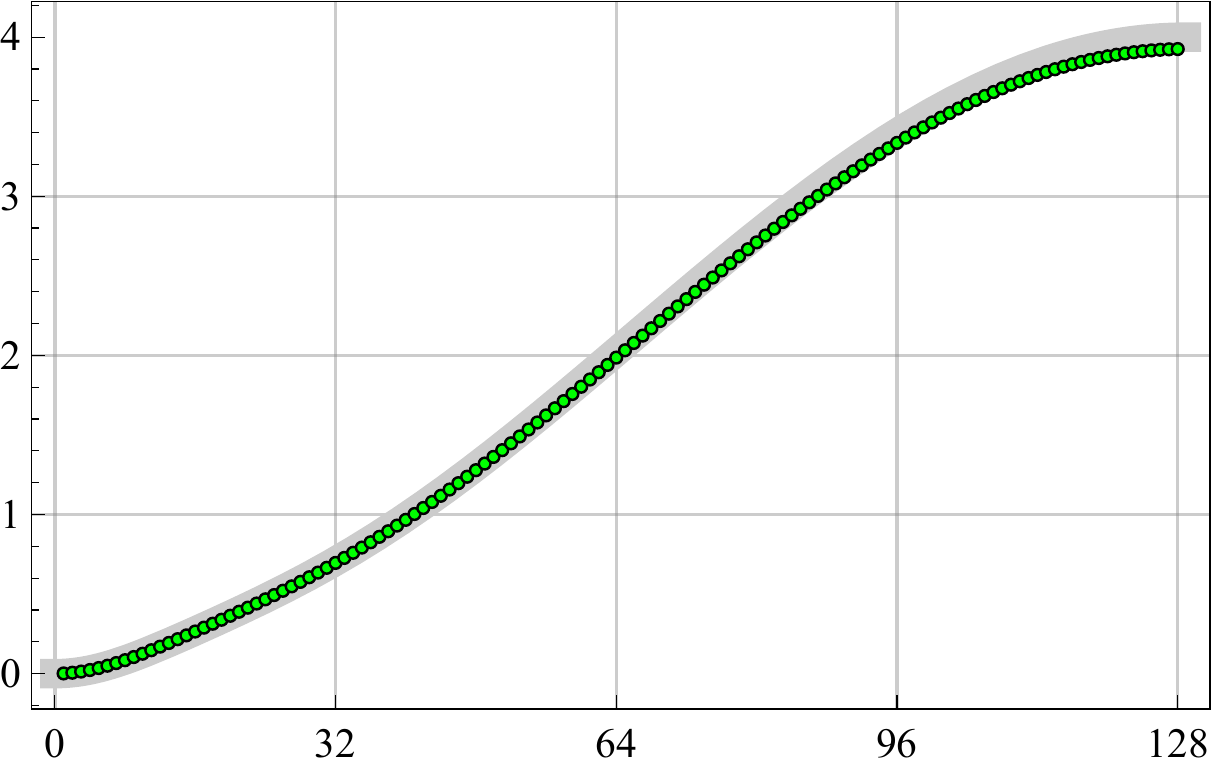}
\hspace{2mm}
\includegraphics[width=75mm]{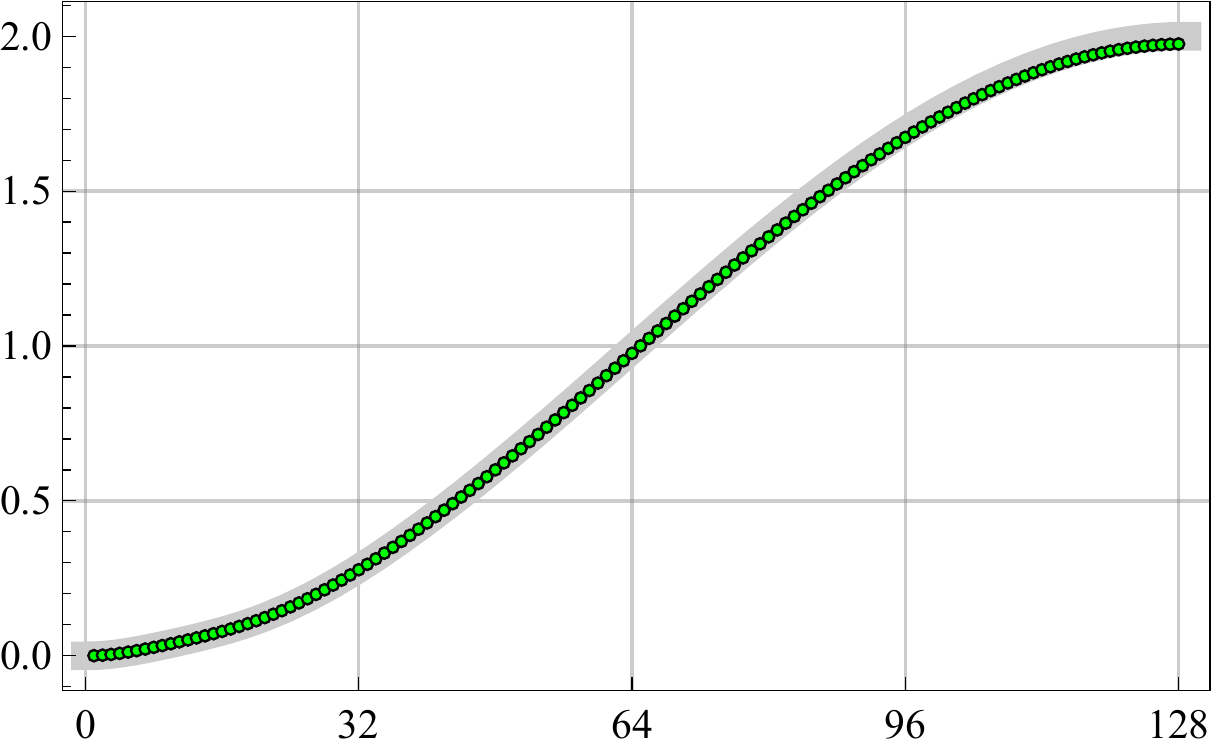}
\caption{Asymptotic expansion for the eigenvalues of the matrix $T_{n}(f+gh^h)$. The light gray curve is the term $\Ga_{1,1}+\Ga_{1,0}\frac{1}{\log(h)}$ (left) and $\Ga_{2,2}+\Ga_{2,1}\frac{1}{\log(h)}+\Ga_{2,0}\frac{1}{\log^{2}(h)}$ (right) in Theorem~\ref{th:SLhh}, and the green dots are the normalized errors $\frac{(n+1)^k}{\log^k(n+1)}\eps_{j,n}^{(k)}$ for $j=1,\ldots,n$ and $n=128$. The left graphic corresponds to the first term $k=1$ and the right one to the second $k=2$.}\label{fg:Ga12}
\end{figure}

\begin{table}[ht]
\centering
{\footnotesize\begin{tabular}{|l|l|l|l|l|l|l|}
\hline
\multicolumn{1}{|c|}{$n$} & \multicolumn{1}{|c|}{$256$} & \multicolumn{1}{|c|}{$512$} & \multicolumn{1}{|c|}{$1024$} & \multicolumn{1}{|c|}{$2048$} & \multicolumn{1}{|c|}{$4096$} & \multicolumn{1}{|c|}{$8192$} \\ \hline\hline
$\eps_{n}^{(1)}$ & $8.5438\x10^{\text{-}2}$ & $4.8362\x10^{\text{-}2}$ & $2.6962\x10^{\text{-}2}$ & $1.4858\x10^{\text{-}2}$ & $8.1128\x10^{\text{-}3}$ & $4.3970\x10^{\text{-}3}$\\ \hline
$\hat\eps_{n}^{(1)}$ & $3.9570\x10^{0}$ & $3.9757\x10^{0}$ & $3.9865\x10^{0}$ & $3.9926\x10^{0}$ & $3.9959\x10^{0}$ & $3.9978\x10^{0}$\\ \hline
$\eps_{n}^{(2)}$ & $9.2570\x10^{\text{-}4}$ & $2.9474\x10^{\text{-}4}$ & $9.1280\x10^{\text{-}5}$ & $2.7663\x10^{\text{-}5}$ & $8.2384\x10^{\text{-}6}$ & $2.4184\x10^{\text{-}6}$\\ \hline
$\hat\eps_{n}^{(2)}$ & $1.9856\x10^{0}$ & $1.9919\x10^{0}$ & $1.9955\x10^{0}$ & $1.9975\x10^{0}$ & $1.9986\x10^{0}$ & $1.9993\x10^{0}$\\ \hline
$\eps_{n}^{(3)}$ & $1.0171\x10^{\text{-}5}$ & $1.8568\x10^{\text{-}6}$ & $3.2748\x10^{\text{-}7}$ & $5.5520\x10^{\text{-}8}$ & $9.3579\x10^{\text{-}9}$ & $1.5453\x10^{\text{-}9}$\\ \hline
$\hat\eps_{n}^{(3)}$ & $1.0104\x10^{0}$ & $1.0316\x10^{0}$ & $1.0585\x10^{0}$ & $1.0773\x10^{0}$ & $1.1182\x10^{0}$ & $1.1615\x10^{0}$\\ \hline
\end{tabular}}
\vspace{2mm}
\caption{The maximum errors $\eps_{n}^{(k)}$ and maximum normalized errors $\hat\eps_{n}^{(k)}\coloneqq(n+1)^k\eps_{n}^{(k)}$ for the $k=1,2,3$ and different values of $n$, corresponding to the asymptotic expansion for the eigenvalues of $T_{n}(f+gh^h)$ in Theorem \ref{th:SLhh}.}\label{tb:Ga12}
\end{table}
\clearpage

\subsection{A substantial improvement of the algorithm in \cite{EkGa19}}

For $k\in\bZ_{+}$ let $f_{k}(\tht)\coloneqq(2-2\cos(\tht))^{k}$ and $\al_{k}\in\bR$. Consider the symbol
\begin{equation}\label{eq:Fn}
F_{n}(\tht)\coloneqq f_{2}(\tht)+\al_{1}f_{1}(\tht)h^{2}+\al_{0}f_{0}(\tht)h^{4}.
\end{equation}
The related Toeplitz matrices $T_{n}(F_{n})$ appear when discretizing differential equations with the Finite Differences method.
\begin{figure}[ht]
\centering
\includegraphics[width=60mm]{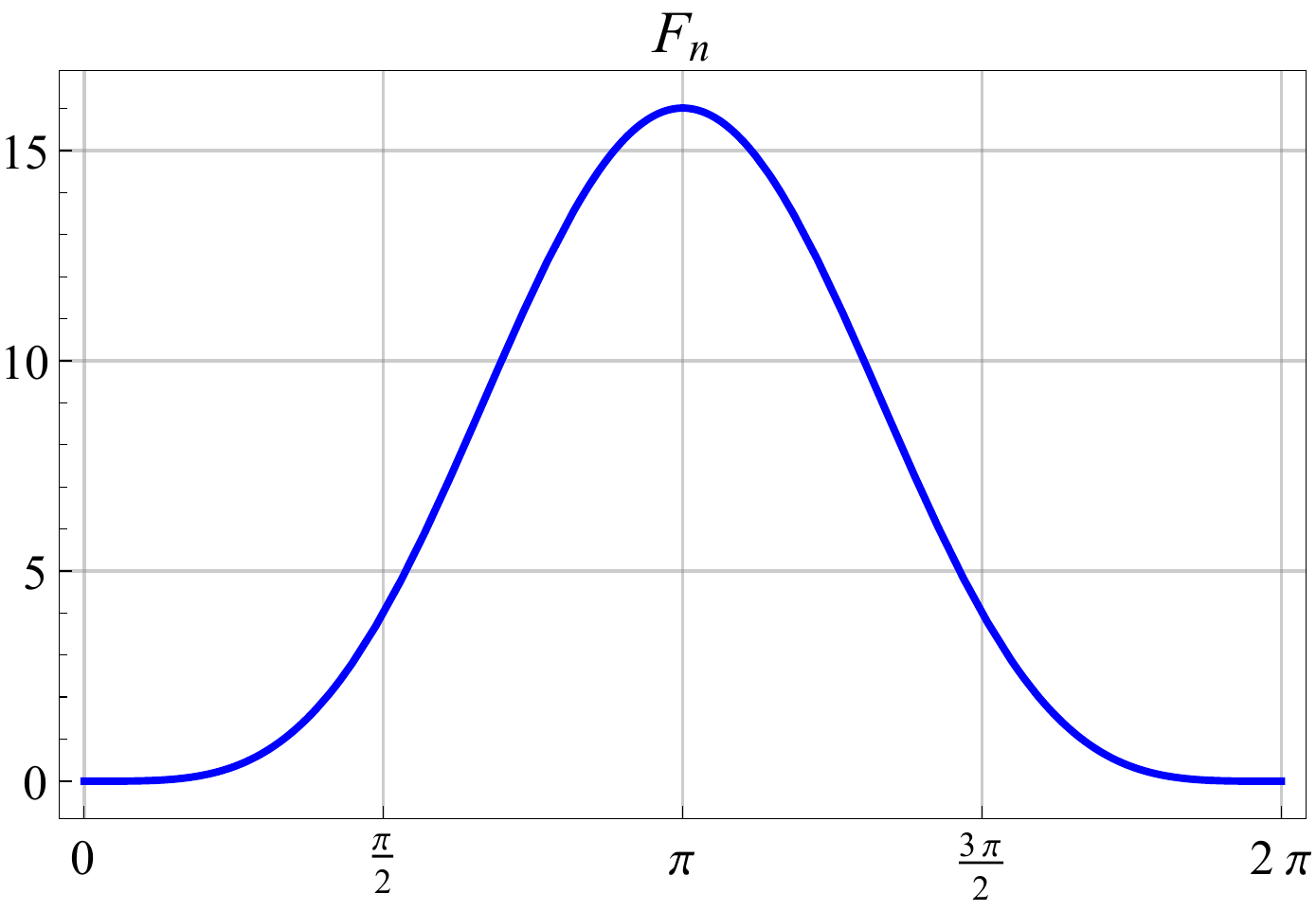}
\includegraphics[width=60mm]{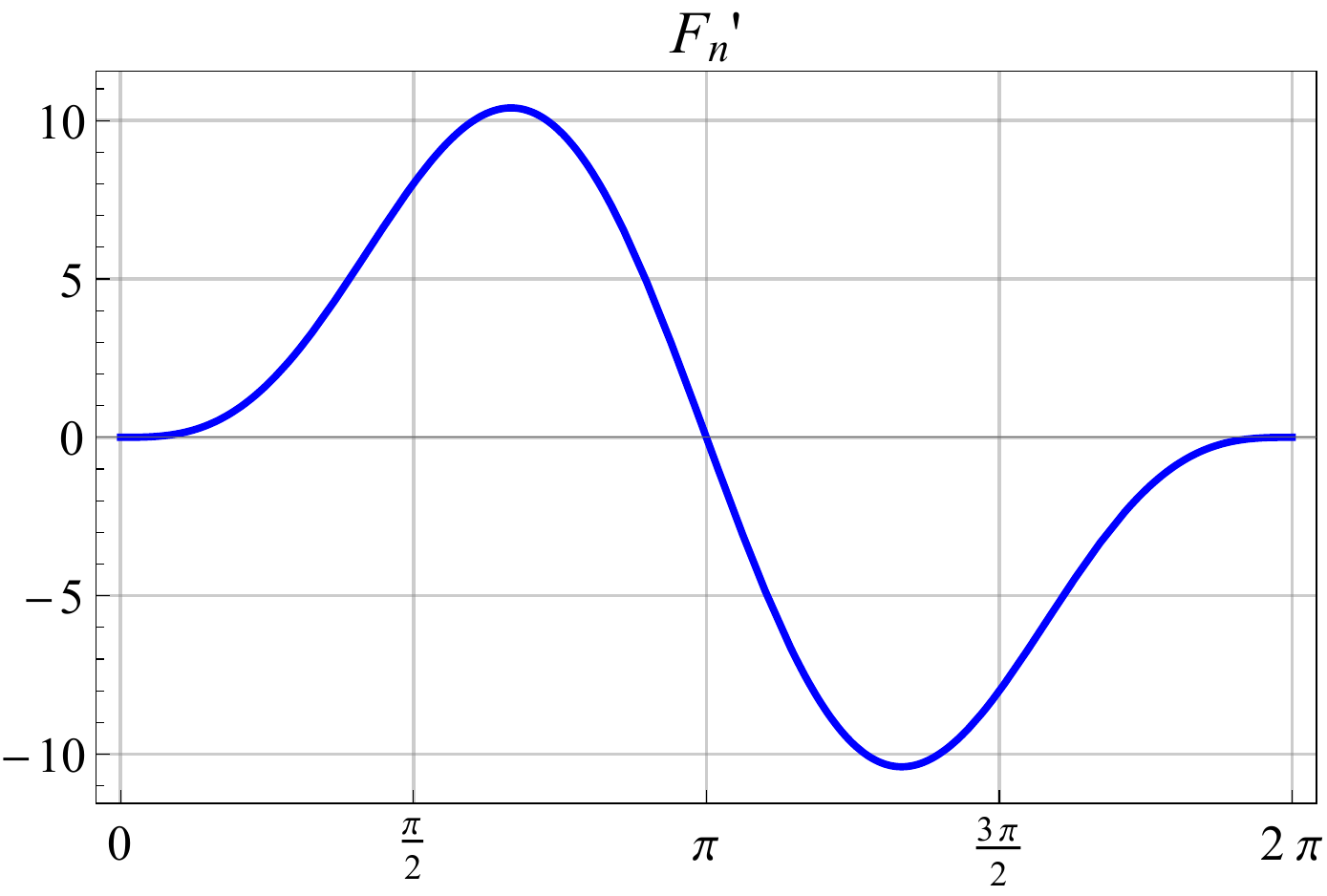}
\includegraphics[width=60mm]{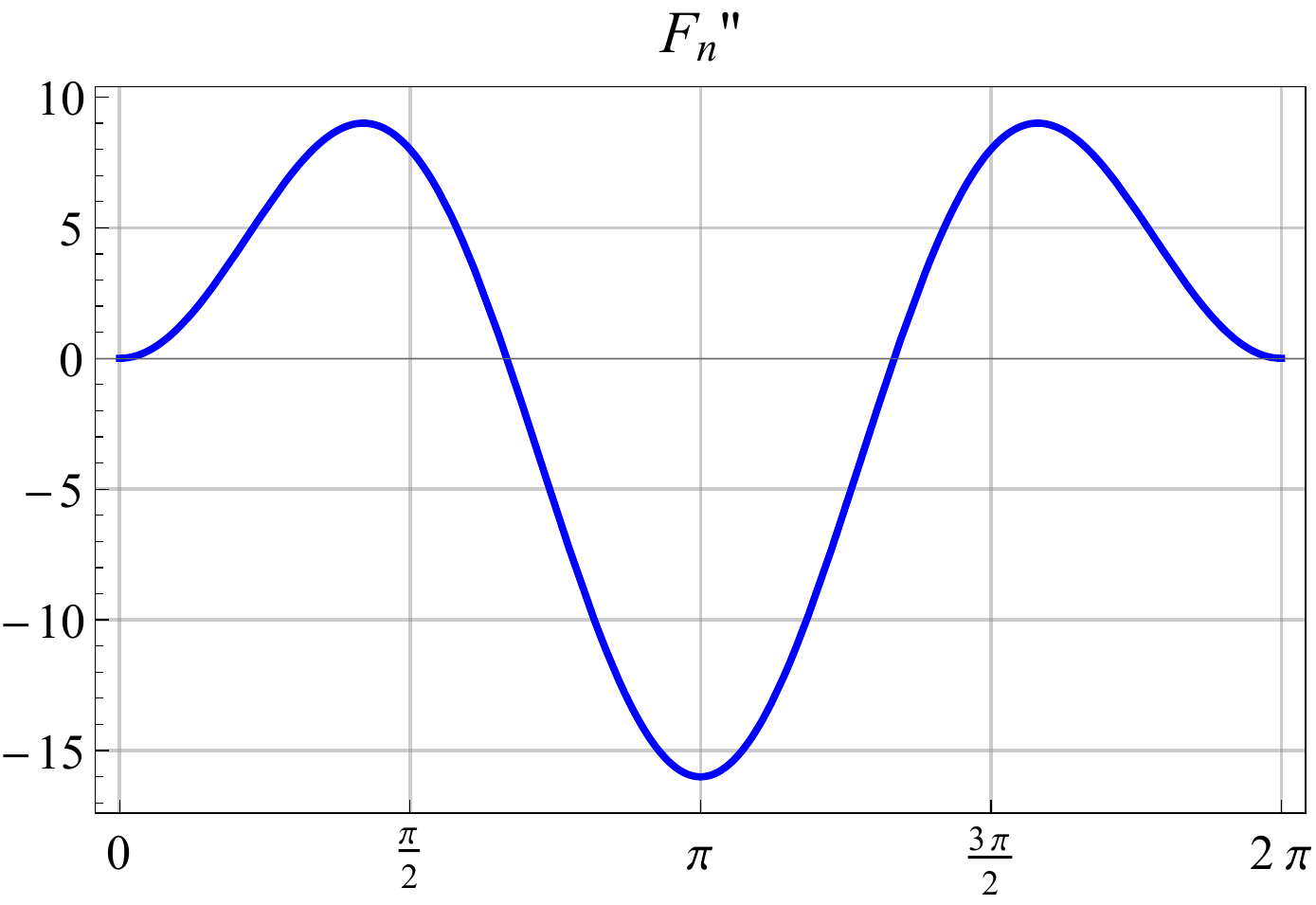}
\caption{The symbol $F_{n}$ and its first two derivatives.}
\end{figure}
The functions $f_{k}$ with $k\ne2$, do not belong to $\SL^{\al}$ for any $\al$, thus $F_{n}$ do not fully satisfy our hypothesis and we can not apply our theoretical results, but in \cite{EkFu18b,EkGa19,EkGa18} there is numerical evidence suggesting that, nevertheless we can expect an eigenvalue expansion of the form
\begin{equation}\label{eq:AEx}
\la_{j}(T_{n}(F_{n}))\sim f_{2}(d_{j,n})+\sum_{\ell=1}^{\nf} c_{\ell}(d_{j,n})h^{\ell},
\end{equation}
where $\sim$ means asymptotic expansion. We now proceed to determine the continuous functions $c_{\ell}$ following the algorithm proposed in \cite{EkFu18b,EkGa19,EkGa18}. This algorithm is a clever interplay between extrapolation and interpolation. For given natural numbers $k$ and $n_{0}$, the extrapolation step, gives us a numerical approximation of $c_{\ell}$ $(\ell=1,\ldots,k-1)$ at the regular mesh $\pi j h_{0}$ $(j=1,\ldots,n_{0})$ of the interval $[0,\pi]$. Then, in the interpolation step, we use certain polynomial interpolation to obtain the value of $c_{\ell}$ at any point in the interval $[0,\pi]$.

We noticed that this algorithm can produce large errors when evaluating $c_{\ell}$ close to the extreme points $\{0,\pi\}$, i.e. $10^{3}$ bigger that the rest. As we can see in Theorems \ref{th:SLh} and \ref{th:SLhh}, the functions $c_{\ell}$ are a sum of products of $\eta$, the symbol, and its derivatives, and we must have $\eta(0)=\eta(\pi)=0$ (see \eqref{eq:eta}). Therefore, considering the nature of the symbol \eqref{eq:Fn} we have
\begin{center}
\begin{tabular}{ll}
$c_{2}(0)=\al_{1}f_{1}(0)=0$, & $c_{2}(\pi)=\al_{1}f_{1}(\pi)=4\al_{1}$,\\
$c_{4}(0)=\al_{0}f_{0}(0)=0$, & $c_{4}(\pi)=\al_{0}f_{0}(\pi)=\al_{0}$,
\end{tabular}
\end{center}
while $c_{\ell}(0)=c_{\ell}(\pi)=0$ for $\ell\ne2,4$. Hence we propose here to include those values in the interpolation step. Our numerical results reveal that the errors in the extreme points almost disappear and that the approximation of the functions $c_{\ell}$ improves in such a way that the   respective errors behave better when $\ell$ increases (see Tables \ref{tb:NF+} and \ref{tb:NF-}).
We emphasize that this contribution represents a substantial improvement to the algorithm presented in \cite{EkGa19} and it is a general idea to be used in various contexts. 

More specifically, let $\tilde \la_{j}^{(k)}(T_{n}(F_{n}))$ be the $k$th-term numerical approximation of the eigenvalue $\la_{j}(T_{n}(F_{n}))$, given by \eqref{eq:AEx}, that is
\[\tilde \la_{j}^{(k)}(T_{n}(F_{n}))\coloneqq f_{2}(d_{j,n})+\sum_{\ell=1}^{k-1} \tilde c_{\ell}(d_{j,n})h^{\ell},\]
where the functions $\tilde c_{\ell}$ are obtained with the numerical algorithm. Let
\[\eps_{j,n}^{(k)}\coloneqq|\la_{j}(T_{n}(F_{n}))-\tilde \la_{j}^{(k)}(T_{n}(F_{n}))|,\qquad
\eps_{n}^{(k)}\coloneqq\max\{\eps_{j,n}^{(k)}\colon j=1,\ldots,n\},\]
be the corresponding errors. According to \cite[Th.3]{EkGa19} we will have $\eps_{n}^{(k)}=O(h_{0}^{k}h)$, thus let $\hat\eps_{n}^{(k)}\coloneqq(n_{0}+1)^{k}(n+1)\eps_{n}^{(k)}$ be the respective normalized error. The Figures \ref{fg:NF+}, \ref{fg:NF-}, and the Tables \ref{tb:NF+}, \ref{tb:NF-}, show the data. 

\begin{table}[ht]
\centering
{\footnotesize\begin{tabular}{|l|l|l|l|l|l|l|}
\hline
\multicolumn{1}{|c|}{$n$} & \multicolumn{1}{|c|}{$256$} & \multicolumn{1}{|c|}{$512$} & \multicolumn{1}{|c|}{$1024$} & \multicolumn{1}{|c|}{$2048$} & \multicolumn{1}{|c|}{$4096$} & \multicolumn{1}{|c|}{$8192$} \\ \hline\hline
$\eps_{n}^{(1)}$ & $1.6359\x10^{\text{-}2}$ & $8.1806\x10^{\text{-}3}$ & $4.0904\x10^{\text{-}3}$ & $2.0453\x10^{\text{-}3}$ & $1.0227\x10^{\text{-}3}$ & $5.1133\x10^{\text{-}4}$\\ \hline
$\hat\eps_{n}^{(1)}$ & $4.2464\x10^{2}$ & $4.2386\x10^{2}$ & $4.2327\x10^{2}$ & $4.2317\x10^{2}$ & $4.2312\x10^{2}$ & $4.2656\x10^{2}$\\ \hline
$\eps_{n}^{(2)}$ & $8.0281\x10^{\text{-}5}$ & $2.0194\x10^{\text{-}5}$ & $5.0639\x10^{\text{-}6}$ & $1.2679\x10^{\text{-}6}$ & $3.1722\x10^{\text{-}7}$ & $7.9337\x10^{\text{-}8}$\\ \hline
$\hat\eps_{n}^{(2)}$ & $2.1047\x10^{2}$ & $1.0568\x10^{2}$ & $5.2948\x10^{1}$ & $2.6502\x10^{1}$ & $1.3258\x10^{1}$ & $6.6307\x10^{0}$\\ \hline
$\eps_{n}^{(3)}$ & $1.0280\x10^{\text{-}7}$ & $1.2951\x10^{\text{-}8}$ & $1.6253\x10^{\text{-}9}$ & $2.0356\x10^{\text{-}10}$ & $2.5467\x10^{\text{-}11}$ & $3.1833\x10^{\text{-}12}$\\ \hline
$\hat\eps_{n}^{(3)}$ & $2.7221\x10^{1}$ & $6.8452\x10^{0}$ & $1.7164\x10^{0}$ & $4.2974\x10^{\text{-}1}$ & $1.0750\x10^{\text{-}1}$ & $2.6871\x10^{\text{-}2}$\\ \hline
$\eps_{n}^{(4)}$ & $3.2772\x10^{\text{-}9}$ & $1.9387\x10^{\text{-}10}$ & $1.5106\x10^{\text{-}11}$ & $6.0169\x10^{\text{-}12}$ & $3.2499\x10^{\text{-}12}$ & $1.5816\x10^{\text{-}12}$\\ \hline
$\hat\eps_{n}^{(4)}$ & $8.7646\x10^{1}$ & $1.0350\x10^{1}$ & $1.6114\x10^{0}$ & $1.2829\x10^{0}$ & $1.3855\x10^{0}$ & $1.3484\x10^{0}$\\ \hline
\end{tabular}}
\vspace{2mm}
\caption{The maximum eigenvalue errors $\eps_{n}^{(k)}$ and normalized errors $\hat\eps_{n}^{(k)}$ for an expansion of $k=1,\ldots,4$ terms, corresponding to the symbol $(2-2\cos(\tht))^{2}+3(2-2\cos(\tht))h^2+2h^{4}$ (see \eqref{eq:Fn}) with different values of $n$ and a regular mesh of $n_{0}=100$ points.}\label{tb:NF+}
\end{table}

\begin{figure}[ht]
\centering
\includegraphics[width=135mm]{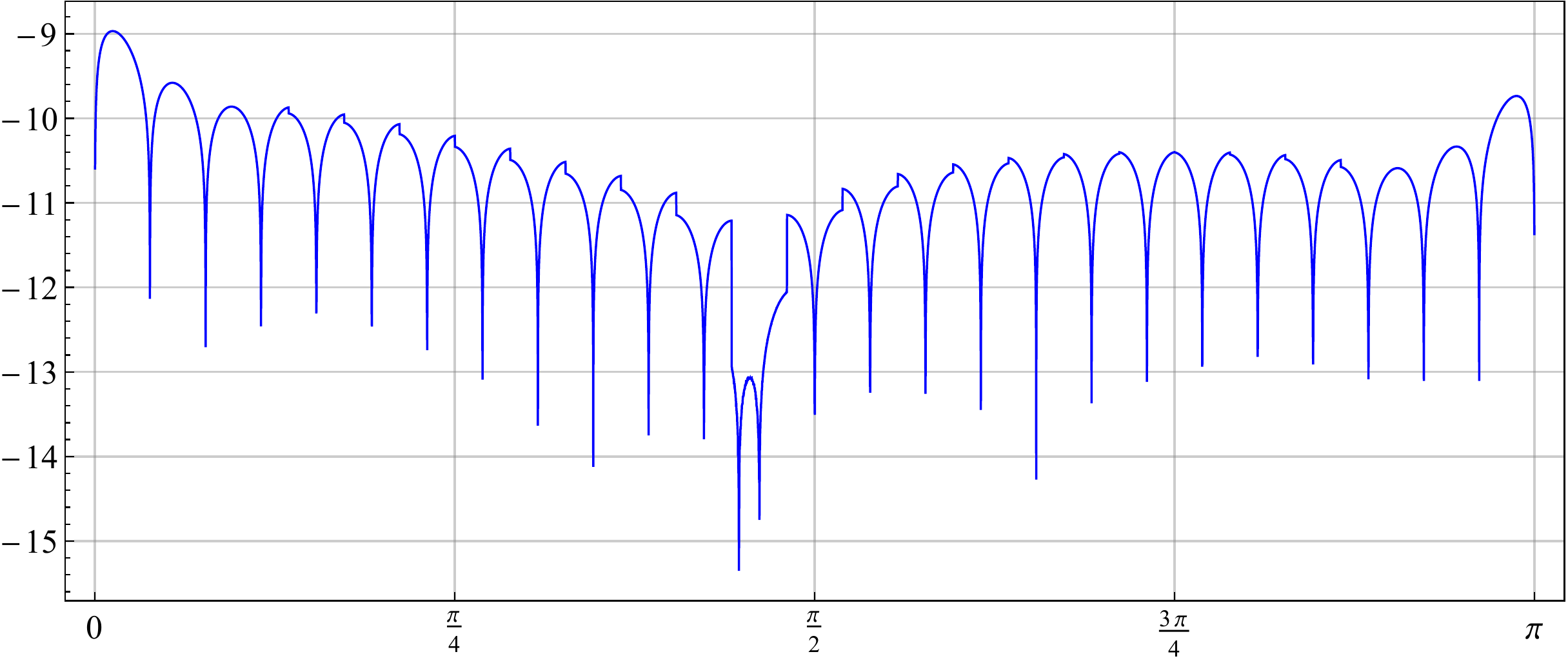}
\bigskip\\
\includegraphics[width=135mm]{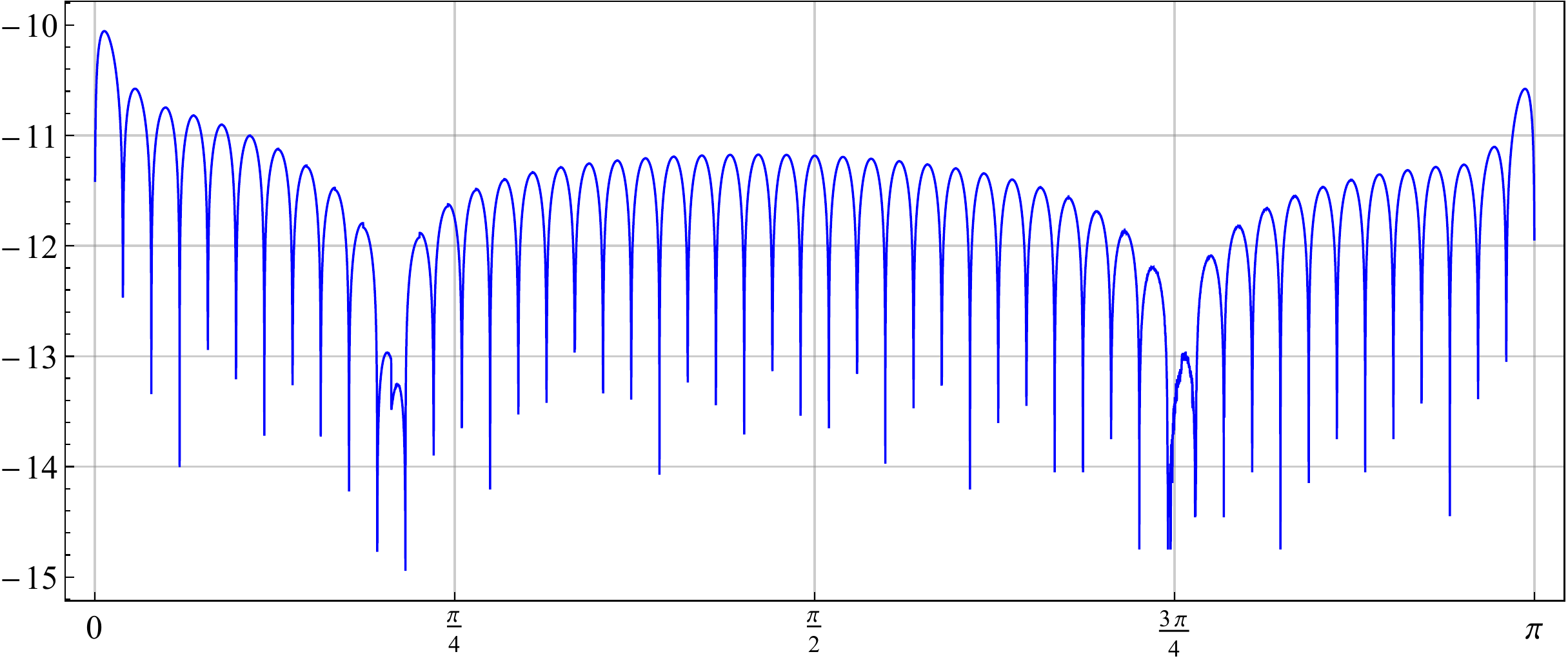}
\bigskip\\
\includegraphics[width=135mm]{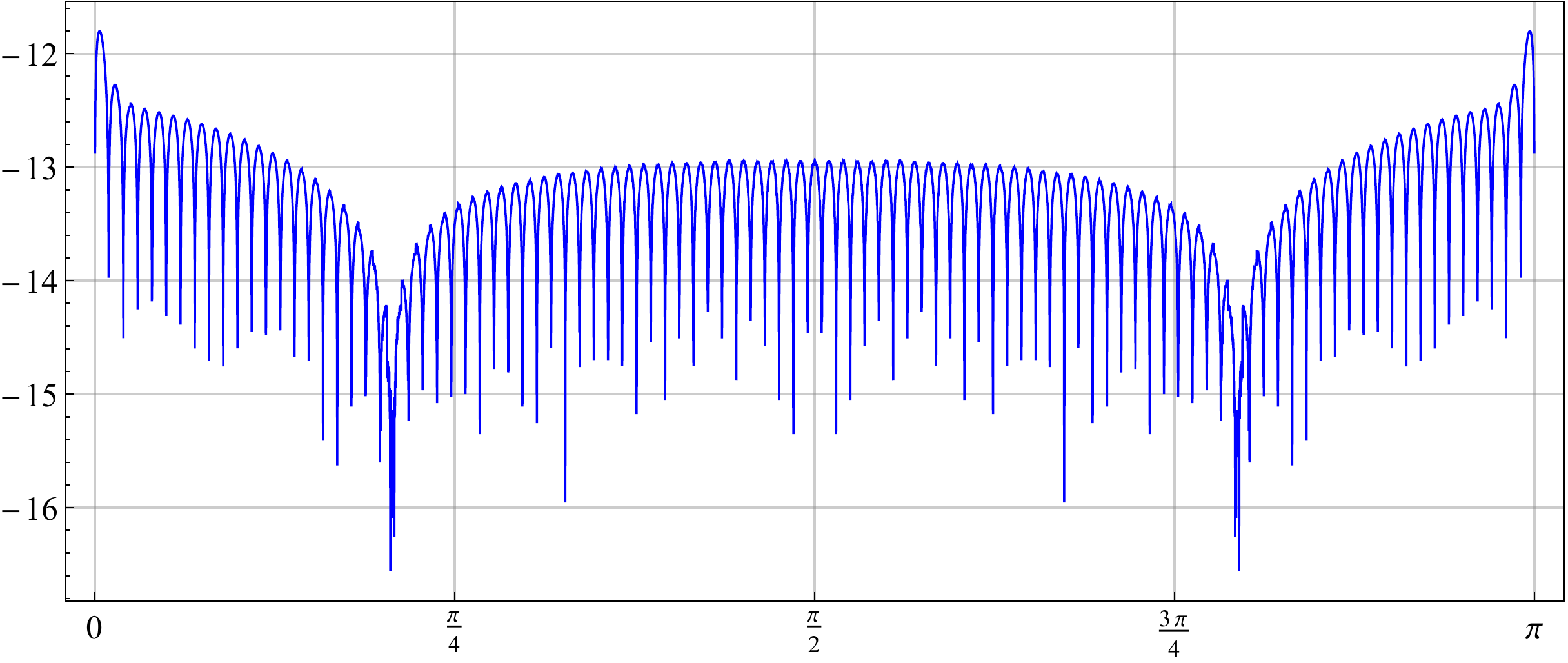}
\caption{The log-scaled eigenvalue errors $\log_{10}(\eps_{j,n}^{(k)})$ for a $k=4$ term expansion and $n=8192$. We worked with a regular mesh of size $n_{0}=25$ (top), $n_{0}=50$ (middle), and $n_{0}=100$ (bottom), and the symbol $(2-2\cos(\tht))^{2}+3(2-2\cos(\tht))h^2+2h^{4}$ (see \eqref{eq:Fn}).}\label{fg:NF+}
\end{figure}

\begin{figure}[ht]
\centering
\includegraphics[width=135mm]{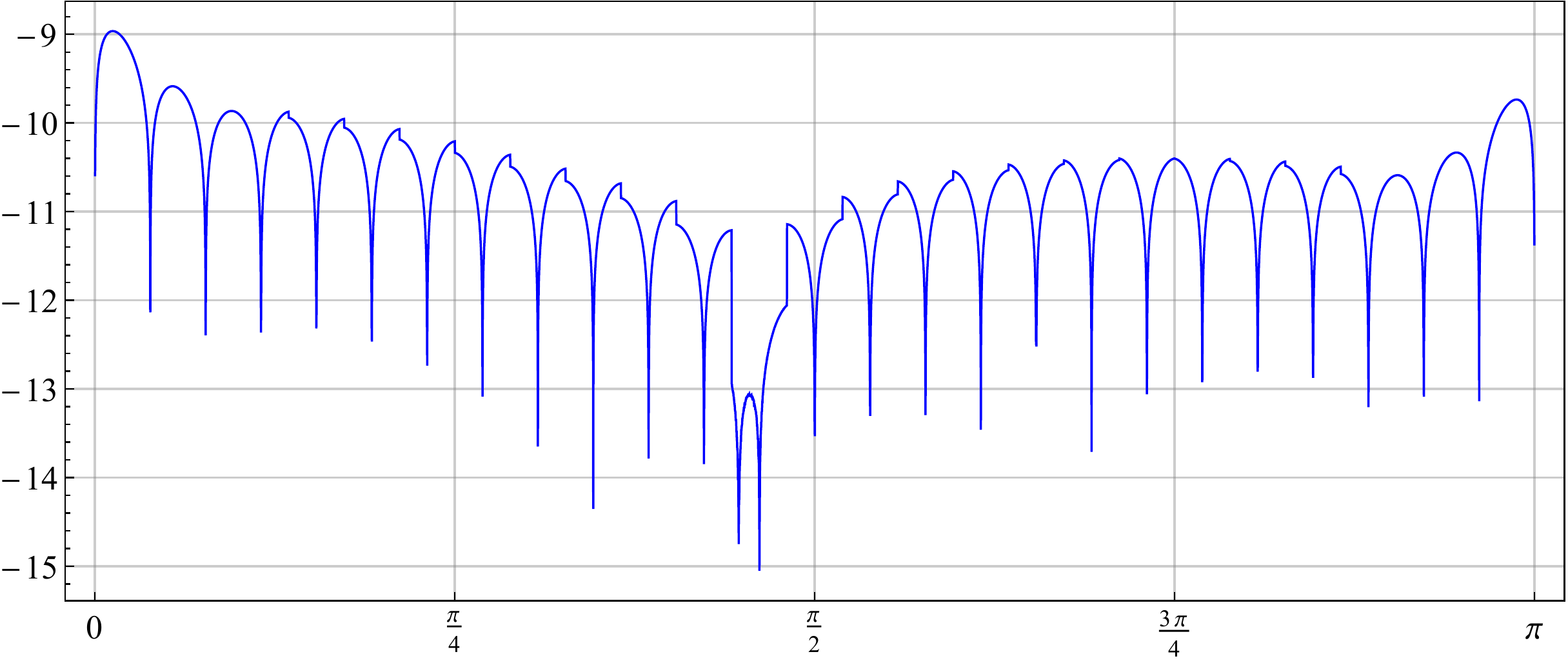}
\bigskip\\
\includegraphics[width=135mm]{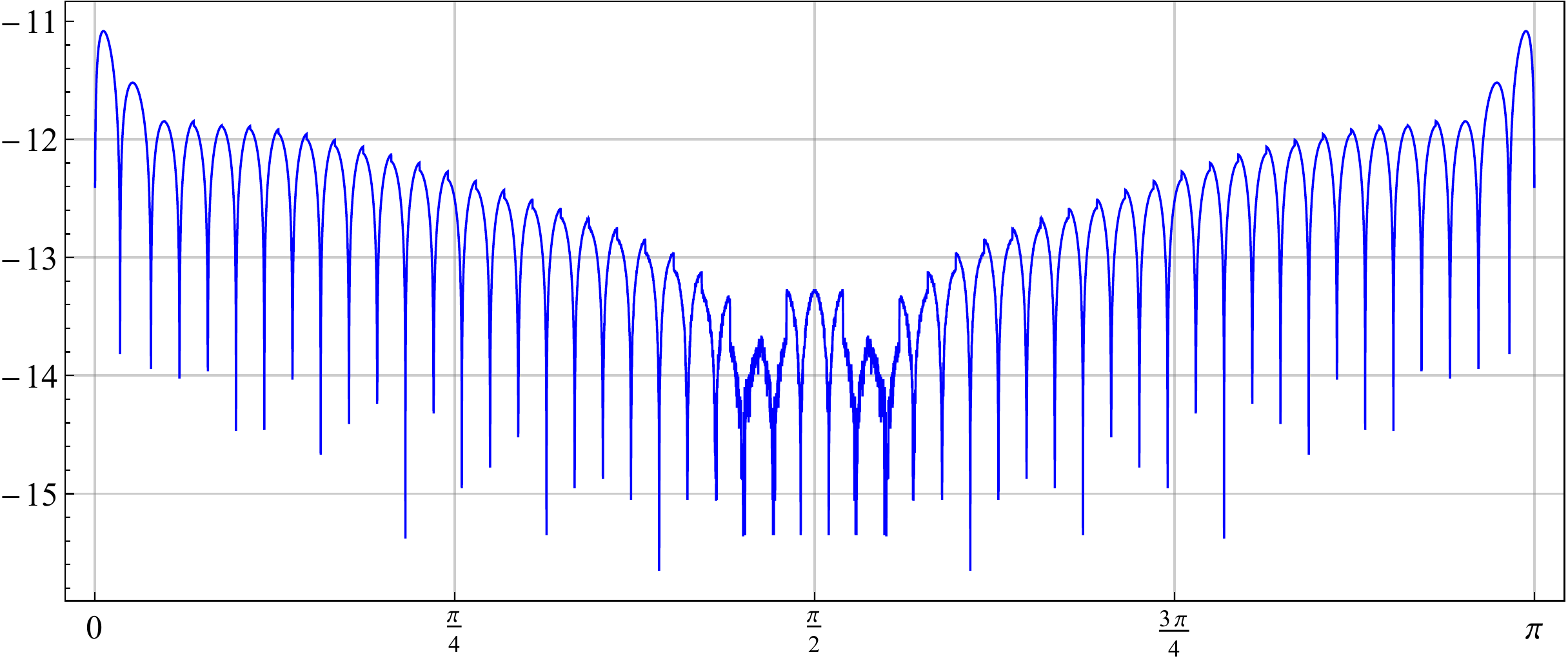}
\bigskip\\
\includegraphics[width=135mm]{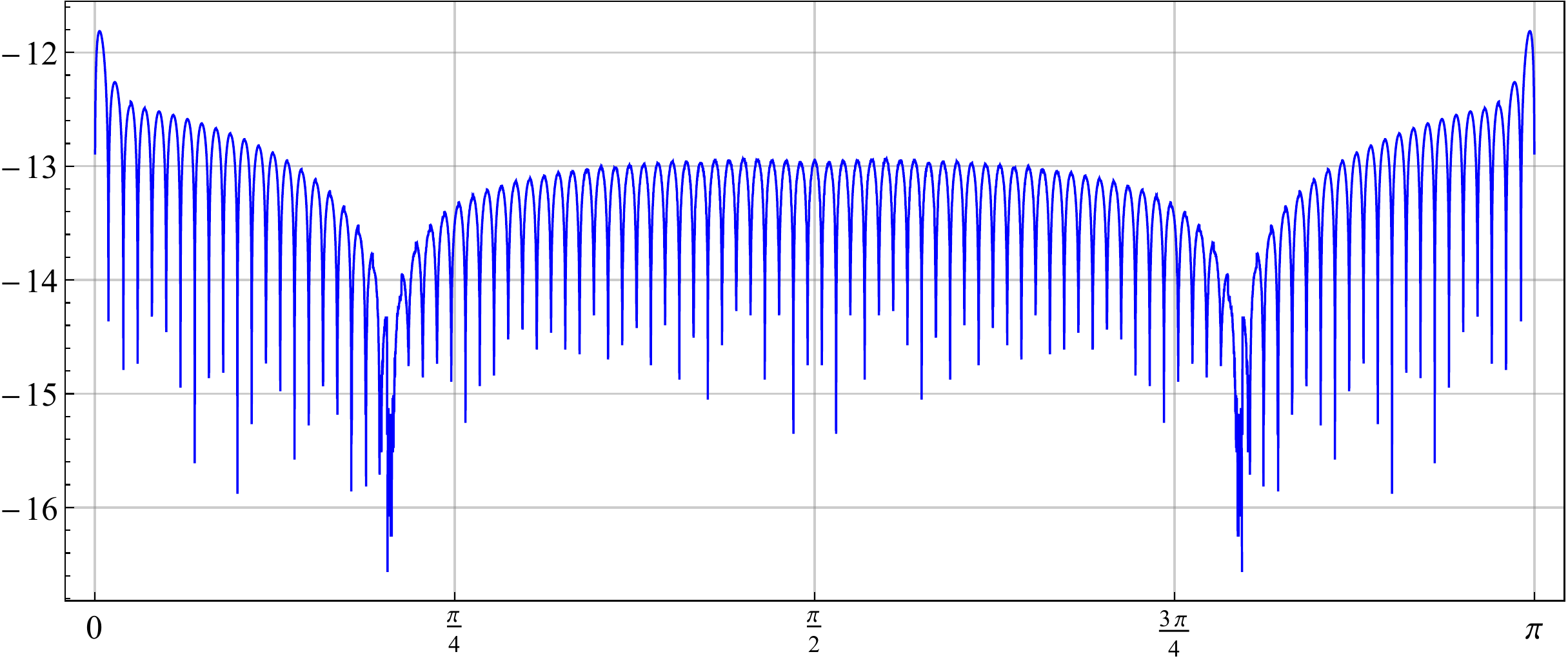}
\caption{
The log-scaled eigenvalue errors $\log_{10}(\eps_{j,n}^{(k)})$ for a $k=4$ term expansion and $n=8192$. We worked with a regular mesh of size $n_{0}=25$ (top), $n_{0}=50$ (middle), and $n_{0}=100$ (bottom), and the symbol $(2-2\cos(\tht))^{2}-3(2-2\cos(\tht))h^2+5h^{4}$ (see \eqref{eq:Fn}).}\label{fg:NF-}
\end{figure}
\clearpage

\begin{table}[ht]
\centering
{\footnotesize\begin{tabular}{|l|l|l|l|l|l|l|}
\hline
\multicolumn{1}{|c|}{$n$} & \multicolumn{1}{|c|}{$256$} & \multicolumn{1}{|c|}{$512$} & \multicolumn{1}{|c|}{$1024$} & \multicolumn{1}{|c|}{$2048$} & \multicolumn{1}{|c|}{$4096$} & \multicolumn{1}{|c|}{$8192$} \\ \hline\hline
$\eps_{n}^{(1)}$ & $1.6179\x10^{\text{-}2}$ & $8.1351\x10^{\text{-}3}$ & $4.0791\x10^{\text{-}3}$ & $2.0424\x10^{\text{-}3}$ & $1.0219\x10^{\text{-}3}$ & $5.1115\x10^{\text{-}4}$\\ \hline
$\hat\eps_{n}^{(1)}$ & $4.1995\x10^{2}$ & $4.2151\x10^{2}$ & $4.2229\x10^{2}$ & $4.2268\x10^{2}$ & $4.2287\x10^{2}$ & $4.2297\x10^{2}$\\ \hline
$\eps_{n}^{(2)}$ & $1.0025\x10^{\text{-}4}$ & $2.5260\x10^{\text{-}5}$ & $6.3398\x10^{\text{-}6}$ & $1.5880\x10^{\text{-}6}$ & $3.9740\x10^{\text{-}7}$ & $9.9398\x10^{\text{-}8}$\\ \hline
$\hat\eps_{n}^{(2)}$ & $2.6283\x10^{2}$ & $1.3219\x10^{2}$ & $6.6289\x10^{1}$ & $3.3193\x10^{1}$ & $1.6609\x10^{1}$ & $8.3074\x10^{0}$\\ \hline
$\eps_{n}^{(3)}$ & $6.7813\x10^{\text{-}8}$ & $8.6059\x10^{\text{-}9}$ & $1.0836\x10^{\text{-}9}$ & $1.3593\x10^{\text{-}10}$ & $1.7021\x10^{\text{-}11}$ & $2.1300\x10^{\text{-}12}$\\ \hline
$\hat\eps_{n}^{(3)}$ & $1.7956\x10^{1}$ & $4.5486\x10^{0}$ & $1.1444\x10^{0}$ & $2.8696\x10^{\text{-}1}$ & $7.1850\x10^{\text{-}2}$ & $1.7980\x10^{\text{-}2}$\\ \hline
$\eps_{n}^{(4)}$ & $5.8948\x10^{\text{-}9}$ & $3.5799\x10^{\text{-}10}$ & $2.5494\x10^{\text{-}11}$ & $5.4580\x10^{\text{-}12}$ & $3.1878\x10^{\text{-}12}$ & $1.5390\x10^{\text{-}12}$\\ \hline
$\hat\eps_{n}^{(4)}$ & $1.5765\x10^{2}$ & $1.9111\x10^{1}$ & $2.7193\x10^{0}$ & $1.1638\x10^{0}$ & $1.3591\x10^{0}$ & $1.3121\x10^{0}$\\ \hline
\end{tabular}}
\vspace{2mm}
\caption{The maximum eigenvalue errors $\eps_{n}^{(k)}$ and normalized errors $\hat\eps_{n}^{(k)}$ for an expansion of $k=1,\ldots,4$ terms, corresponding to the symbol $(2-2\cos(\tht))^{2}-3(2-2\cos(\tht))h^2+5h^{4}$ (see \eqref{eq:Fn}) with different values of $n$ and a regular mesh of $n_{0}=100$ points.}\label{tb:NF-}
\end{table}

\section{Conclusions, Perspectives, and Open Problems}\label{sec:Final}

The eigenvalues of Toeplitz matrices $T_{n}(f)$ with a real-valued symbol $f$, satisfying some conditions and tracing out a simple loop over the interval $[-\pi,\pi]$, are known to admit an asymptotic expansion with the form
\[\la_{j}(T_{n}(f))=f(d_{j,n})+c_{1}(d_{j,n})h+c_{2}(d_{j,n})h^{2}+O(h^{3}),\]
where $h=\frac{1}{n+1}$, $d_{j,n}=\pi j h$, and $c_k$ are some bounded coefficients depending only on $f$.
In practice, the latter expansion is numerically observed under the only condition of monotonicity and even character of the generating function \cite{EkGa18}.

In this note we have investigated the superposition caused over this expansion, when considering a linear combination of symbols and, from a theoretical point of view, we stress that this is the first time that an eigenvalue expansion is theoretically obtained for a Toeplitz matrix-sequence with a symbol depending on $n$. As a further relevant contribution we have improved the precision of the algorithm in \cite{EkGa19}, by using analytic information obtained in our theoretical findings. 

The problem has noteworthy applications in the differential setting, when the coefficients of the linear combination are given functions of $h$:
In particular, by using the new expansions, we can give matrix-less eigensolvers for large matrices stemming from the numerical approximation of standard differential operators and distributed order fractional differential equations.
We notice that the present approach can be viewed also as a successful application of the notion of $\glt$ momentary symbols, a quite new research line discussed in \cite{BoEk21-0,BoEk21a}. As future developments we will consider
\newpage
\begin{itemize}
\item a professional code in the spirit of matrix-less algorithms;
\item an extension of the theory to the case of Hermitian even matrix-valued symbols, using the basic study in \cite{EkFu18b}, with the idea of treating in full generality matrix-sequences stemming from Finite Elements and IgA approximations of coercive differential problems (see also \cite{DoGa15,EkFu18a,GaSp19} and references therein);
\item the case where the monotonicity is violated, which seems to be a very challenging setting, as widely discussed in \cite{EkGa18};
\item the case of variable coefficient differential operators, as a test of the notion of $\glt$ momentary symbols (Definition \ref{def:momentarysymbols}) in full generality: an example of interest in applications would the extension of the techniques to the case of operators as those in equation (\ref{mixed orders}) with $\al_s(x)$ being Riemann integrable functions, for $s=0,1,\ldots,s$. Under these conditions the related matrix-sequences are not of Toeplitz type any longer, but they belong to the $\glt$ class and admit $\glt$ momentary symbols, in accordance with Definition \ref{def:momentarysymbols}.
\end{itemize}

\bibliographystyle{acm}
\bibliography{Toeplitz}
\end{document}